\newtheorem{theorem}{Theorem}[section]
\newtheorem{lemma}[theorem]{Lemma}
\numberwithin{theorem}{section}
\newcommand{\C}{\mathbb{C}}
\newcommand{\N}{\mathbb{N}}
\begin{document}

\title{Toward Resolving Kang and Park's generalization of the Alder-Andrews theorem}

\author{Leah Sturman}
\address{Southern Connecticut State University}
\email{sturmanL1@southernct.edu}
\author{Holly Swisher}
\address{Oregon State University}
\email{swisherh@oregonstate.edu}

\thanks{This work was supported by NSF grant DMS-2101906.}

\subjclass[2010]{11P82, 11P84, 11P81}

\keywords{Alder's Conjecture, Alder-Andrews Theorem, Kang-Park Conjecture, $d$-distinct partitions, Rogers-Ramanujan identities}

\begin{abstract}
The Alder-Andrews Theorem, a partition inequality generalizing Euler's partition identity, the first Rogers-Ramanujan identity, and a theorem of Schur to $d$-distinct partitions of $n$, was proved successively by Andrews in 1971, Yee in 2008, and Alfes, Jameson, and Lemke Oliver in 2010.  While Andrews and Yee utilized $q$-series and combinatorial methods,  Alfes et al. proved the finite number of remaining cases using asymptotics originating with Meinardus together with high-performance computing.  In 2020, Kang and Park conjectured a ``level $2$" Alder-Andrews type partition inequality which relates to the second Rogers-Ramanujan identity.  Duncan, Khunger, the second author, and Tamura proved Kang and Park's conjecture for all but finitely many cases using a combinatorial shift identity.  Here, we generalize the methods of Alfes et al. to resolve nearly all of the remaining cases of Kang and Park's conjecture.   
\end{abstract}

\maketitle

\section{Introduction and Statement of Results}

A partition of a positive integer $n$ is a non-increasing sequence of positive integers, called parts, that sum to $n$.  The study of partition counting functions has famously revealed deep connections with many important areas of mathematics and mathematical physics through its connections to automorphic forms and representation theory (see \cite{andrews_book} and \cite{BFOR} for some examples). We write $p(n | \text{condition})$ to denote the number of partitions of $n$ satisfying a specified condition, and define
\begin{align}
q_d^{(a)}(n) &:= p(n\, | \text{ parts} \geq a \text{ and differ by at least } d), \label{RR1} \\
Q_d^{(a)}(n)  &:= p(n\, | \text{ parts} \equiv \pm a \!\!\! \pmod{d+3}), \label{RR2} \\
\Delta_d^{(a)}(n) &:= q_d^{(a)}(n)-Q_d^{(a)}(n).
\end{align}

This notation allows us to write Euler's partition identity, which states that the number of partitions of $n$ into distinct parts equals those into odd parts, as $\Delta_1^{(1)}(n)=0$.  Similarly, the celebrated first and second Rogers-Ramanujan identities, written here in terms of $q$-Pochhammer notation\footnote{Here $(a;q)_0:=1$ and $(a;q)_n:=\prod_{k=0}^{n-1}(1-aq^k)$ for $1\leq n\leq \infty$.} as
\begin{align*}
\sum_{n=0}^\infty\frac{q^{n^2}}{(q;q)_n} &=\frac{1}{(q;q^5)_\infty(q^4;q^5)_\infty},\\
\sum_{n=0}^\infty\frac{q^{n^2+n}}{(q;q)_n} &=\frac{1}{(q^2;q^5)_\infty(q^3;q^5)_\infty},
\end{align*}
can be written as $\Delta_2^{(1)}(n)=0$ and $\Delta_2^{(2)}(n)=0$, respectively.

Motivated by these identities, Schur \cite{SCHUR} proved that the number of partitions of $n$ into parts differing by at least $3$, where no two consecutive multiples of 3 appear, equals the number of partitions of $n$ into parts congruent to $\pm1\pmod{6}$, which implies that $\Delta_3^{(1)}(n)\geq 0$. 

After Lehmer \cite{Lehmer} and Alder \cite{Alder_nonex} proved that no other partition identities for $q_d^{(a)}(n)$ analogous to the Rogers-Ramanujan identities can exist, in 1956 Alder \cite{Alder_conj} conjectured a different type of generalization.  Namely, that for all positive integers $n,d\geq 1$,
\begin{equation}\label{Alder_conj}
\Delta_d^{(1)}(n)\geq 0.
\end{equation}

In 1971, Andrews \cite{Andrews_Alder} proved \eqref{Alder_conj} when $d=2^k-1$ and $k\geq 4$.  Later, in 2004 Yee \cite{Yee_04,Yee_08} proved \eqref{Alder_conj} for all $d\geq 32$ and $d=7$.  Both Andrews and Yee used $q$-series and combinatorial methods.  In 2011, Alfes, Jameson, and Lemke Oliver \cite{AJLO} completely resolved the conjecture using asymptotic methods originating with Meinardus \cite{Meinardus_asymptotics, Meinardus_partitions}, together with detailed computer programming and high-performance computing to prove the remaining cases.

In 2020, Kang and Park \cite{KangPark} posed the question of whether \eqref{Alder_conj} can be generalized to incorporate the second Rogers-Ramanujan identity.  They observed that $\Delta_d^{(2)}(n)$ is negative for some $n,d\geq 1$, but observed that removing one part in the calculation of $Q_d^{(2)}(n)$ by defining
\begin{align*}
Q_d^{(a,-)}(n)  &:= p(n\, | \text{ parts} \equiv \pm a \!\!\! \pmod{d+3},\text{ excluding the part } d+3-a), \\
\Delta_d^{(a,-)}(n) &:= q_d^{(a)}(n)-Q_d^{(a,-)}(n),
\end{align*}
appeared to suffice, where by definition $\Delta_d^{(2,-)}(n)\geq \Delta_d^{(2)}(n)$.  Kang and Park's conjecture states that for all $n,d\geq 1$,
\begin{equation}\label{KP_conj}
\Delta_d^{(2,-)}(n)\geq 0.
\end{equation}

Kang and Park \cite{KangPark} proved \eqref{KP_conj} when $n$ is even and $d=2^k-2$ for $k\geq 5$ or $k=2$.  Then in 2021, Duncan, Khunger, the second author, and Tamura \cite{DKST} proved \eqref{KP_conj} for all $d\geq 62$.  Since $\Delta_2^{(2)}(n)=0$, this leaves the remaining cases of $d=1$ and $3\leq d\leq 61$.  Here, we prove all but three of these cases.

\begin{theorem}\label{fullKP}
Kang and Park's conjecture \eqref{KP_conj} is true for $6\leq d \leq 61$ and $d=1$.
\end{theorem}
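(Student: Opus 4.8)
The plan is to follow the strategy of Alfes, Jameson, and Lemke Oliver \cite{AJLO}, adapting their asymptotic machinery to the level-$2$ setting. The key point is that $\Delta_d^{(2,-)}(n)\geq \Delta_d^{(2)}(n)$, so it suffices to bound $q_d^{(2)}(n)$ from below and $Q_d^{(2)}(n)$ from above and show the former dominates for all but finitely many $n$. For the lower bound on $q_d^{(2)}(n)$, I would use the same elementary combinatorial injection that underlies the Alder case: adding a suitable staircase-type partition to a $d$-distinct partition with parts $\geq 2$ shows $q_d^{(2)}(n)$ is at least a partition function with a tractable generating function, or alternatively one obtains a clean lower bound of the shape $q_d^{(2)}(n)\geq q_d^{(2)}(n-k)+\text{(something)}$ that can be iterated. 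For the upper bound on $Q_d^{(2)}(n)$, note that $Q_d^{(2)}(n)$ counts partitions into parts in two residue classes mod $d+3$, so its generating function is an eta-quotient-like infinite product $\prod (1-q^n)^{-1}$ over those classes; Meinardus' theorem (or a direct saddle-point / circle-method estimate) gives an asymptotic of the form $Q_d^{(2)}(n)\sim C n^{\kappa} e^{\lambda\sqrt{n}}$ with explicit constants depending on $d$.

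The core of the argument is then to make these two estimates \emph{effective}: produce explicit constants $A_d, B_d, c_d > 0$ and a threshold $N_d$ such that $q_d^{(2)}(n)\geq A_d e^{c_d\sqrt{n}}$ and $Q_d^{(2)}(n)\leq B_d e^{c_d'\sqrt{n}}$ for $n\geq N_d$, where crucially $c_d > c_d'$ — i.e., the growth rate coming from the $d$-distinct side strictly exceeds that from the two-residue-class side. Comparing exponential rates, one checks $c_d>c_d'$ holds for the relevant range of $d$; this is precisely where the level-$2$ modification (removing the part $d+1$) matters, since it slightly enlarges the $q$-side count relative to $Q_d^{(2)}$ and should not affect the dominant exponential term. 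Once the exponential comparison is in hand, the finitely many $n < N_d$ for each $d$ in $6\leq d\leq 61$ and $d=1$ are dispatched by direct computation of $\Delta_d^{(2,-)}(n)$, which is a finite (if large) check best organized by the same high-performance computing approach as in \cite{AJLO}.

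I would carry this out in the following order. First, establish the combinatorial lower bound for $q_d^{(2)}(n)$, reducing to an inequality among partition functions whose generating functions are infinite products. Second, invoke Meinardus-type asymptotics to extract, for each fixed $d$ in the target range, the leading term and an explicit error bound for both $q_d^{(2)}(n)$ (via the lower bound product) and $Q_d^{(2)}(n)$; here one must track constants carefully rather than using $\sim$. Third, solve the resulting transcendental inequality $A_d e^{c_d\sqrt{n}} > B_d e^{c_d'\sqrt{n}}$ for $n$, obtaining $N_d$. Fourth, for each $d$, compute $q_d^{(2)}(n)$ and $Q_d^{(2,-)}(n)$ for all $n\leq N_d$ and verify nonnegativity of the difference; for $d=1$ this is a separate (and easier) elementary argument since $d+3=4$ and the structure is simpler.

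The main obstacle I anticipate is twofold. The genuinely delicate part is making the error terms in the asymptotic estimates explicit and small enough that $N_d$ stays within computational reach for every $d$ up to $61$ — Meinardus' theorem as usually stated is only an asymptotic, so one needs an effective version (or a hands-on saddle-point analysis) with fully explicit constants, and the error analysis for the $d$-distinct generating function is subtler than for a pure eta-quotient. Related to this, the three excluded values of $d$ (which the theorem omits) are presumably exactly those where either $c_d - c_d'$ is too small to force $N_d$ below the computational horizon, or where the combinatorial lower bound is too lossy; identifying which $d$ fail and why — rather than simply stopping when the computer runs out of time — will require care, and the statement's restriction to $6\leq d\leq 61$ (rather than $3\leq d\leq 61$) signals that $d=3,4,5$ are the problematic ones, likely because their small $d+3\in\{6,7,8\}$ makes the two competing exponential rates too close.
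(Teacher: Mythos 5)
Your overall strategy---effective Meinardus/saddle-point asymptotics for $q_d^{(2)}(n)$ and for an upper bound on $Q_d^{(2,-)}(n)$, an explicit threshold $N(d)$ obtained by comparing the exponential rates (which amounts to $2\sqrt{A_d}>2\pi/\sqrt{3(d+3)}$), high-performance computation for $n<N(d)$, and a separate elementary argument for $d=1$---is exactly the route the paper takes, and your guess that $d=3,4,5$ are the problematic excluded values is correct ($d=3$ because the asymptotic theorems require $d\geq 4$; $d=4,5$ because $N(d)$ lands beyond the computational horizon, at roughly $3.9\times 10^7$ and $1.5\times 10^8$).

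There is, however, a genuine gap in your plan for odd $d$, which is more than half the cases. You propose to bound $Q_d^{(2,-)}(n)\leq Q_d^{(2)}(n)$ and then extract a Meinardus-type asymptotic for the product $\prod_{n\equiv\pm 2\pmod{d+3}}(1-q^n)^{-1}$. When $d$ is odd, $d+3$ is even and both residues $2$ and $d+1$ are even, so \emph{every} allowed part is even: the generating function is invariant under $q\mapsto -q$, hence has a second dominant singularity at $q=-1$, and the minor-arc estimate (the analogue of Lemma \ref{lemma Ti}, which bounds $Re(g(\tau))-g(y)$ away from $0$ for $|x|$ up to $\tfrac12$) fails---indeed $Q_d^{(2)}(n)$ vanishes for odd $n$ and is roughly twice the single-saddle main term for even $n$, so the asymptotic you want is simply false for odd $d$. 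This is why the paper's Theorem \ref{Q2asymptotics} is restricted to even $d$; for odd $d$ the paper instead invokes Andrews' inequality $Q_d^{(1)}(n)\geq Q_d^{(2,-)}(n)$ and reuses the already-effective asymptotics for $Q_d^{(1)}(n)$ from Alfes et al. Without some such substitute, your program stalls at every odd $d$ in the range. Two smaller corrections: the level-$2$ modification removes the part $d+1$ from the $Q$-side, shrinking $Q_d^{(2,-)}$ relative to $Q_d^{(2)}$ (it does not enlarge the $q$-side), and the $d=1$ case is not handled by asymptotics at all but by the chain $q_1^{(2)}(n)\geq q_1^{(1)}(n/2)=Q_1^{(1)}(n/2)=Q_1^{(2)}(n)$ for even $n$, the odd-$n$ case being vacuous.
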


A first approach in using asymptotics to prove Theorem \ref{fullKP} would be to obtain explicit asymptotics for the functions $q_d^{(2)}(n)$ and $Q_d^{(2,-)}(n)$, determine a bound $N(d)$ such that for any $n\geq N(d)$ it follows that $q_d^{(2)}(n) \geq Q_d^{(2,-)}(n)$, and use computing to show $q_d^{(2)}(n) \geq Q_d^{(2,-)}(n)$ holds for all $n<N(d)$.  However, while generalizing Alfes et al. \cite[Thm. 3.1]{AJLO} to $q_d^{(2)}(n)$ is straightforward (see Theorem \ref{q2asymptotics}), approaching $Q_d^{(2,-)}(n)$ is more difficult.  One way of avoiding this altogether is to observe that by a theorem of Andrews \cite[Thm. 3]{Andrews_Alder} it follows that $Q_d^{(1)}(n) \geq Q_d^{(2,-)}(n)$ for all $d,n\geq 1$.  Thus when $d\geq 4$ we can use the existing asymptotics of $Q_d^{(1)}(n)$ given by Alfes et al. \cite[Thm. 2.1]{AJLO} together with Theorem \ref{q2asymptotics} to obtain a bound past which we can guarantee that 
\begin{equation} \label{method1} 
q_d^{(2)}(n)\geq Q_d^{(1)}(n)\geq Q_d^{(2,-)}(n).
\end{equation} 
We do in fact use this method when $d$ is odd.  However, when $d$ is even the bounds we obtained were not sufficient for computations using the High Performance Computing Cluster (HPC) at Oregon State University.  Instead when $d$ is even, we use the trivial fact that $Q_d^{(2)}(n) \geq Q_d^{(2,-)}(n)$ for all $d,n\geq 1$ to approach the problem by way of asymptotics for $Q_d^{(2)}(n)$.  In particular, we prove the following result by modifying the approach of Alfes et al. in \cite[Thm. 2.1]{AJLO}.  

\begin{theorem}\label{Q2asymptotics}
For $d\geq 4$ even and $n\geq 1$,
\[
Q_{d}^{(2)}(n)=\frac{1}{4(3(d+3))^{\frac{1}{4}}\sin(\frac{2\pi}{d+3})} n^{-\frac{3}{4}} \exp{\left(\frac{2\pi \sqrt{n}}{\sqrt{3(d+3)}}\right)}+R_d(n),
\]
where $R_d(n)$ is an explicitly bounded function described in \S \ref{bounds}.
\end{theorem}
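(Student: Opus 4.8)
The plan is to follow the circle-method/Meinardus-style analysis of Alfes, Jameson, and Lemke Oliver \cite[Thm. 2.1]{AJLO}, adapted to the generating function for $Q_d^{(2)}(n)$. First I would write down the generating function
\[
\sum_{n\geq 0} Q_d^{(2)}(n)\, q^n \;=\; \prod_{\substack{m\geq 1 \\ m\equiv \pm 2 \!\!\!\pmod{d+3}}} \frac{1}{1-q^m},
\]
and record its associated Dirichlet series $D(s)=\sum_{m\equiv \pm 2} m^{-s}$, which is a finite linear combination of Hurwitz zeta functions $\zeta(s,\tfrac{2}{d+3})$ and $\zeta(s,\tfrac{d+1}{d+3})$ scaled by $(d+3)^{-s}$. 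The key analytic data Meinardus' theorem extracts from this are: the residue of $D(s)$ at its rightmost pole $s=1$, which here is $\tfrac{2}{d+3}$ (two arithmetic progressions mod $d+3$), giving the exponential constant $2\pi\sqrt{\tfrac{2/(d+3)}{\cdot}}$; and the value $D(0)$ together with $D'(0)$, which control the polynomial prefactor. Computing $A:=\operatorname{Res}_{s=1}D(s)=\tfrac{2}{d+3}$ yields the main exponential $\exp\!\big(2\pi\sqrt{n}/\sqrt{3(d+3)}\big)$ after the standard Meinardus normalization $2\pi\sqrt{An/?}$ — I would double-check that the $\tfrac{1}{\sqrt{3}}$ arises from the $\Gamma(s)\zeta(s+1)$-type factor (i.e.\ the $\tfrac{\pi^2}{6}$ that appears in the classical $p(n)$ asymptotic), which is consistent with the stated exponent.

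Next I would carry out the saddle-point analysis. Set $q=e^{-\tau}$ with $\tau$ near the real saddle $\tau_0 \asymp n^{-1/2}$; take logs of the product, and use the Mellin–Barnes representation
\[
\log\!\prod_{m\equiv\pm 2}\frac{1}{1-e^{-m\tau}} \;=\; \frac{1}{2\pi i}\int_{c-i\infty}^{c+i\infty} \Gamma(s)\,\zeta(s+1)\,D(s)\,\tau^{-s}\,ds
\]
for $c>1$. Shifting the contour left past the poles at $s=1$ (double: from $\zeta(s+1)\cdot$simple pole of $D$), $s=0$ (from $\Gamma$), and the trivial zero cancellations, I pick up the main term $\tfrac{A}{\tau} + C_0 + C_1\log\tau + o(1)$ where $C_0 = D(0)\log(\text{something}) + \zeta'(0)\cdots$ and the coefficient of $\log\tau$ is $-D(0)$. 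Here the crucial new computation is $D(0)$: for the union of two residue classes mod $d+3$ one gets $D(0) = -2\cdot\tfrac{1}{2} + (\text{correction}) $, and I would pin down the exact constant so that the prefactor $n^{-3/4}$ (rather than $n^{-1/2}$ as for unrestricted $p(n)$, or other powers) emerges — the $n^{-3/4}$ is exactly what one expects when $D(0)=-1/2$ in the two-progressions setting, matching the known shape of $Q_d^{(1)}(n)$ in \cite{AJLO}. The constant $D'(0)$ then produces the factor $\tfrac{1}{4(3(d+3))^{1/4}\sin(2\pi/(d+3))}$; the $\sin(2\pi/(d+3))$ arises from the $\Gamma$-function reflection/product identity applied to the Hurwitz-zeta derivative at $0$ (i.e.\ $\prod(1-\zeta^j)$-type evaluations), which is precisely the mechanism that produces the sine in the Alfes et al.\ formula for $Q_d^{(1)}$.

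Finally, to promote the asymptotic to an \emph{exact} identity with an \emph{explicitly bounded} error term $R_d(n)$, I would not merely invoke Meinardus' theorem (which gives $O$-estimates with ineffective constants) but instead reproduce the effective version as done in \cite{AJLO}: (i) bound the minor arc, i.e.\ show $\big|\prod_{m\equiv\pm 2}(1-e^{-\tau})^{-1}\big|$ is exponentially smaller than the major-arc contribution when $|\arg\tau|$ is bounded away from $0$, using a lower bound for $\operatorname{Re}\big(\sum 1/(1-e^{-m\tau})\big)$ of Meinardus type; (ii) bound the tail of the contour shift in the Mellin–Barnes integral explicitly using Stirling for $\Gamma(s)$ and polynomial bounds for $\zeta(s+1)D(s)$ on vertical lines; and (iii) control the Gaussian saddle-point remainder by an explicit second-derivative estimate. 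Assembling (i)–(iii) into a single function $R_d(n)$ and verifying it satisfies the bound stated in \S\ref{bounds} is the step I expect to be the main obstacle: it is not conceptually hard, but it requires tracking every constant through the Hurwitz-zeta bounds uniformly in $d$ over the finite range $4\leq d\leq 61$, and making sure the minor-arc estimate is strong enough that the resulting bound on $R_d(n)$ is small enough to be useful for the subsequent HPC verification. Since $d$ ranges over a finite set, any $d$-uniform but possibly lossy bound suffices, which makes the task manageable.
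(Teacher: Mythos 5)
Your overall strategy is the same as the paper's: write the generating function as a product over two residue classes mod $d+3$, associate the Dirichlet series $D(s)=(d+3)^{-s}\bigl(\zeta(s,\tfrac{2}{d+3})+\zeta(s,\tfrac{d+1}{d+3})\bigr)$, and run an effective Meinardus/saddle-point argument (major arc via the Mellin--Barnes representation, minor arc via a lower bound on the real part of the log-generating function). So the skeleton is right. However there are two genuine gaps.

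First, your computation of $D(0)$ is wrong in a way that matters. Using $\zeta(0,a)=\tfrac12-a$, one gets $D(0)=(\tfrac12-\tfrac{2}{d+3})+(\tfrac12-\tfrac{d+1}{d+3})=0$, not $-\tfrac12$. In Meinardus' theorem with $\alpha=1$ the polynomial exponent is $\kappa=\tfrac{D(0)-3/2}{2}$; $D(0)=0$ gives $\kappa=-\tfrac34$, matching the stated $n^{-3/4}$, while your claimed $D(0)=-\tfrac12$ would give $n^{-1}$ (that is the $p(n)$ case). The $\sin(\tfrac{2\pi}{d+3})$ does indeed come from $D'(0)$ via Lerch's formula $\zeta'(0,a)=\log\Gamma(a)-\tfrac12\log(2\pi)$ and the $\Gamma$-reflection formula, so that part of your sketch is correct.

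Second, and more seriously, your sketch never confronts the reason the theorem is stated only for \emph{even} $d\geq 4$. This restriction is not cosmetic: the minor-arc estimate (step (i) in your outline) actually fails for odd $d$. Concretely, the paper decomposes $-y\bigl(\operatorname{Re} g(\tau)-g(y)\bigr)$ into three nonnegative terms $T_1,T_2,T_3$ involving $1-\cos(4\pi x)$, $1-\cos(2\pi(d+1)x)$, and $1-\cos(2\pi(d+3)x)$, and one must show at least one of these is bounded away from zero on the range $y^\beta\leq|x|\leq\tfrac12$. When $d$ is odd, all three vanish simultaneously at $|x|=\tfrac12$, so no such bound exists and the argument collapses; it is precisely the parity of $d$ that keeps the three factors from having a common zero. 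Any complete proof must identify this obstruction and exploit the evenness of $d$ in the minor-arc bound. As written, your proposal would proceed as if the bound holds for all $d\geq 4$, which would produce a false statement, so this is a missing idea rather than a routine bookkeeping step.
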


Armed with Theorem \ref{Q2asymptotics}, for even $d\geq 4$ we can obtain a bound past which we can guarantee
\begin{equation} \label{method2} 
q_d^{(2)}(n)\geq Q_d^{(2)}(n)\geq Q_d^{(2,-)}(n),
\end{equation} 
and the bounds produced for the range of $d$ we are considering are sufficient for our computations.  

When $d=4,5$ the bounds obtained using the methods above are not sufficient for our computations, and when $d<4$ a different approach is needed since Theorems \ref{q2asymptotics} and \ref{Q2asymptotics} don't apply.

As a consequence of our computations, we obtain the following result.

\begin{theorem}\label{CKK_extension}
    For $6\leq d\leq 61$,
    \begin{equation*}
        \Delta_d^{(2)}(n)\geq 0,
    \end{equation*} for all $n\geq 0$ when $d$ is even and for all $n\geq 0$ except $n=d+1,d+3,d+5$ when $d$ is odd. 
\end{theorem}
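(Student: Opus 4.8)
Since $Q_d^{(2)}(n)\ge Q_d^{(2,-)}(n)$ always, the bound $\Delta_d^{(2)}(n)\ge 0$ is strictly stronger than the conclusion $\Delta_d^{(2,-)}(n)\ge 0$ of Theorem~\ref{fullKP}; so although most of the infrastructure carries over, the inequality $q_d^{(2)}(n)\ge Q_d^{(2)}(n)$ must be re-examined. The plan is the familiar two-part scheme: for each $d$ in the range, produce an explicit threshold $N(d)$ beyond which $q_d^{(2)}(n)\ge Q_d^{(2)}(n)$ is forced by asymptotics, and then verify $\Delta_d^{(2)}(n)\ge 0$ --- with the stated exceptions --- for all $n<N(d)$ by computer.

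For even $d$ the asymptotic half is already in hand. The chain \eqref{method2} that the proof of Theorem~\ref{fullKP} establishes for even $d$ from Theorems~\ref{q2asymptotics} and~\ref{Q2asymptotics} says exactly that $q_d^{(2)}(n)\ge Q_d^{(2)}(n)$ for all $n\ge N(d)$, hence $\Delta_d^{(2)}(n)\ge 0$ there with no extra work. For $n<N(d)$ one runs the same HPC computation used for Theorem~\ref{fullKP}, now comparing $q_d^{(2)}(n)$ directly against $Q_d^{(2)}(n)$ --- which the code produces alongside $Q_d^{(2,-)}(n)$, e.g.\ from the identity $Q_d^{(2)}(n)=Q_d^{(2,-)}(n)+Q_d^{(2)}(n-d-1)$ obtained by splitting on whether the part $d+1$ occurs --- and reads off that $\Delta_d^{(2)}(n)\ge 0$ throughout, with no exceptions, which is the even-$d$ assertion.

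For odd $d$ a genuinely new input is needed, because \eqref{method1} only bounds $q_d^{(2)}(n)$ below by $Q_d^{(1)}(n)$, and $Q_d^{(1)}(n)\ge Q_d^{(2)}(n)$ can fail when $n$ is even. The observation that unlocks the case is that for odd $d$ the modulus $d+3$ is even and both residues $\pm 2$ are even, so every part counted by $Q_d^{(2)}$ is even: thus $Q_d^{(2)}(n)=0$ for odd $n$, giving $\Delta_d^{(2)}(n)=q_d^{(2)}(n)\ge 0$ at once, while for $n=2m$ even, dividing every part by $2$ is a bijection onto partitions of $m$ into parts $\equiv\pm1\pmod{(d+3)/2}$, so that
\[
Q_d^{(2)}(2m)=Q_{(d-3)/2}^{(1)}(m).
\]
Feeding this into the Meinardus-type asymptotic expansion for $Q_{(d-3)/2}^{(1)}(m)$ (as in \cite[Thm.~2.1]{AJLO}) and comparing it, as a function of $n=2m$, with Theorem~\ref{q2asymptotics} for $q_d^{(2)}(2m)$ --- both of which have exponential order $\exp\!\bigl(2\pi\sqrt{n}/\sqrt{3(d+3)}\bigr)$ --- produces the threshold $N(d)$ on the even-$n$ side, and nothing is needed on the odd-$n$ side. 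A finite HPC check for $n<N(d)$ then confirms that the only values where $\Delta_d^{(2)}(n)<0$ are $n=d+1,d+3,d+5$; that these three fail is immediate, e.g.\ $q_d^{(2)}(d+3)=1$ whereas $Q_d^{(2)}(d+3)\ge 2$ since both $\{d+1,2\}$ and $\{2^{(d+3)/2}\}$ contribute.

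The principal obstacle is the odd-$d$ asymptotic comparison: because $q_d^{(2)}(2m)$ and $Q_{(d-3)/2}^{(1)}(m)$ grow at the same exponential rate, everything rides on the subleading constants, so one must check that the constant on the $q$-side strictly dominates for every odd $d$ with $7\le d\le 61$ and that the resulting $N(d)$ stays within reach of the HPC --- the same delicacy, and the same small-$d$ caveat seen at $d=4,5$, that already arises in Theorem~\ref{fullKP}. Any finitely many values of $d$ for which the clean threshold is too large to compute past are folded into the finite computation by other means, exactly as there.
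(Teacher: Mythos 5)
Your overall two-part scheme (asymptotic threshold plus finite computer search) and your treatment of even $d$ match the paper's. But for odd $d$ you propose a genuinely different, and more careful, argument than what the paper records, and you have in fact put your finger on what appears to be a gap in the paper's reasoning. The paper's Section~\ref{bounds} only establishes, for odd $d$, that $q_d^{(2)}(n)\ge Q_d^{(1)}(n)$ for $n\ge N(d)$, which suffices for $\Delta_d^{(2,-)}(n)\ge 0$; the paper then derives Theorem~\ref{CKK_extension} for odd $d$ solely from its computation of $\Delta_d^{(2)}(n)$ for $n\le N(d)$, with no explicit argument covering $n>N(d)$. You correctly observe that $Q_d^{(1)}(n)\ge Q_d^{(2)}(n)$ can fail for odd $d$ and even $n$: indeed dividing parts by two shows $Q_d^{(2)}(2m)=Q_{(d-3)/2}^{(1)}(m)$, whose Meinardus main term exceeds that of $Q_d^{(1)}(2m)$ by a factor $1/\cos(\pi/(d+3))>1$, so the chain~\eqref{method1} does not close the odd-$d$ case of Theorem~\ref{CKK_extension} for $n$ beyond the computed range. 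Your bijection $Q_d^{(2)}(2m)=Q_{(d-3)/2}^{(1)}(m)$ together with the triviality $Q_d^{(2)}(n)=0$ for odd $n$ is exactly the right way to repair this: one can then produce a threshold from the AJLO asymptotics for $Q_{(d-3)/2}^{(1)}$ versus Theorem~\ref{q2asymptotics}, and a finite computer check finishes the job, modulo two small caveats. First, your remark that $q_d^{(2)}(2m)$ and $Q_{(d-3)/2}^{(1)}(m)$ ``grow at the same exponential rate'' is slightly off: since $2\sqrt{A_d}>2\pi/\sqrt{3(d+3)}$, the $q$-side grows strictly faster exponentially, which is what makes the threshold finite at all; the delicate part is the \emph{explicit} constant in the error bound, not a coincidence of leading exponents. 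Second, for the smallest odd values ($d=7,9$, i.e.\ $(d-3)/2=2,3$) the AJLO asymptotic for $Q_{d'}^{(1)}$ as cited requires $d'\ge 4$, so those two cases would need a separate (but easy) overestimate of $Q_{2}^{(1)}$ and $Q_{3}^{(1)}$. With those adjustments your proposal is sound and arguably more rigorous than what the paper actually prints.
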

We note that Theorem \ref{CKK_extension} proves additional cases of a recent Theorem of Cho, Kang, and Kim \cite[Theorem 1.1]{CKK}.  Their theorem  holds for $d=126$, and $d\geq 253$. 

The rest of this paper is organized as follows.  In Section \ref{preliminaries}, we prove the $d=1$ case of Theorem \ref{fullKP} using combinatorial methods and state some needed results from the work of Alfes et al. \cite{AJLO}.  We then prove Theorem \ref{Q2asymptotics} in Section \ref{asymptotics}.  In Section \ref{bounds}, we describe how we obtain explicit bounds $N(d)$ which guarantee $\Delta_d^{(2,-)}(n)\geq 0$ for all $n\geq N(d)$ when $4\leq d \leq 61$.  In Section \ref{computations}, we discuss our computations to show that $\Delta_d^{(2,-)}(n)\geq 0$ for all $n \leq N(d)$ when $4\leq d \leq 61$, and complete the proofs of Theorem \ref{fullKP} and Theorem \ref{CKK_extension}.  Lastly in Section \ref{conclusion}, we conclude with a brief discussion on possible future work.

\section{Preliminaries}\label{preliminaries} 

We first give an asymptotic formula for $q_d^{(2)}(n)$ when $d\geq 4$.

\begin{theorem}\label{q2asymptotics}
Let $d\geq 4$, and $\alpha$ the unique real solution of $x^d+x-1=0$ in the interval $(0,1)$. Let $A_d:=\frac{d}{2}\log ^2\alpha+\sum_{r=1}^{\infty}\frac{\alpha^{rd}}{r^2}$.  For positive integers $n\geq 1$,
\begin{equation*}
    q_d^{(2)}(n)=\frac{A_d^{1/4}}{2\sqrt{\pi\alpha^{d-3}(d\alpha^{d-1}+1)}}n^{-\frac{3}{4}}\exp\left(2\sqrt{A_d n}\right)+r_d(n),
\end{equation*}
where $|r_d(n)|$ is an explicitly bounded function described in \S \ref{bounds}.
\end{theorem}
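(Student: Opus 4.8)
\emph{Proof proposal.} The plan is to adapt, essentially verbatim, the argument of Alfes, Jameson, and Lemke Oliver for $q_d^{(1)}(n)$ \cite[Thm.~3.1]{AJLO}; the only substantive change is a shift in one exponent in the governing generating function. First I would record the generating function: the standard bijection sending a partition $\lambda_1>\lambda_2>\cdots>\lambda_k\ge 2$ with $\lambda_i-\lambda_{i+1}\ge d$ to the partition with parts $\mu_i=\lambda_i-d(k-i)-2\ge 0$ gives
\[
\sum_{n\ge 0} q_d^{(2)}(n)\,q^n \;=\; \sum_{k\ge 0}\frac{q^{\,d\binom{k}{2}+2k}}{(q;q)_k}\;=:\;F_d(q),
\]
which is exactly the series treated in \cite{AJLO} with the linear exponent $k$ replaced by $2k$. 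The asymptotic then follows the classical two-scale saddle-point (Meinardus) strategy: first determine the behavior of $F_d(e^{-t})$ as $t\to 0^+$, then recover the coefficients via a Cauchy integral with a saddle in $t$.

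For the inner step I would write $F_d(e^{-t})=\sum_k e^{g_k(t)}$ with $g_k(t)=-t\bigl(d\binom{k}{2}+2k\bigr)-\sum_{j=1}^{k}\log(1-e^{-jt})$ and apply Laplace's method in $k$. Differentiating in $k$ and using $-\frac{\partial}{\partial k}\sum_{j=1}^{k}\log(1-e^{-jt})\approx-\log(1-e^{-kt})$ shows the summand peaks at $k^\ast\approx t^{-1}\log(1/\alpha)$, where $\alpha\in(0,1)$ satisfies the saddle condition $1-\alpha=\alpha^{d}$, i.e.\ $\alpha$ is the root of $x^d+x-1=0$; the second derivative there is $-t\bigl(d+\alpha^{1-d}\bigr)$. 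Evaluating $g_{k^\ast}(t)$ with the classical expansions $\log(e^{-t};e^{-t})_\infty=-\frac{\pi^2}{6t}+\frac12\log\frac{2\pi}{t}+O(t)$ and $\sum_{j>k^\ast}\log(1-e^{-jt})=-\frac1t\operatorname{Li}_2(\alpha)-\frac12\log(1-\alpha)+O(t)$, together with the dilogarithm reflection formula and $\log(1-\alpha)=d\log\alpha$, collapses the $1/t$-coefficient to exactly $A_d=\frac d2\log^2\alpha+\operatorname{Li}_2(\alpha^{d})$, and the remaining factors assemble (the $t$-powers from $e^{g_{k^\ast}}$ and from the Laplace width $\sqrt{2\pi/|g''_{k^\ast}|}$ cancelling, which is precisely what yields the final $n^{-3/4}$ rather than $n^{-1/4}$) into
\[
F_d(e^{-t})\;=\;\frac{\alpha^{(3-d)/2}}{\sqrt{d\alpha^{d-1}+1}}\;e^{A_d/t}\bigl(1+o(1)\bigr),\qquad t\to 0^+.
\]

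For the outer step I would write $q_d^{(2)}(n)=\frac1{2\pi}\int_{-\pi}^{\pi}F_d(e^{-\sigma+i\theta})e^{(\sigma-i\theta)n}\,d\theta$ with $\sigma=\sqrt{A_d/n}$ the saddle of $A_d/t+nt$ (whose value is $2\sqrt{A_d n}$ and whose second derivative at $\sigma$ is $2n^{3/2}/\sqrt{A_d}$). On the major arc I substitute the Step-2 expansion and evaluate the resulting Gaussian integral, obtaining the claimed main term $\frac{A_d^{1/4}}{2\sqrt{\pi\,\alpha^{d-3}(d\alpha^{d-1}+1)}}\,n^{-3/4}e^{2\sqrt{A_d n}}$ after simplifying $\alpha^{(3-d)/2}=\alpha^{-(d-3)/2}$; on the minor arc I bound $|F_d(e^{-\sigma+i\theta})|$ term-by-term against $F_d(e^{-\sigma})$ times a decaying factor and absorb the remainder into $r_d(n)$. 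The main obstacle is not the structure but the bookkeeping: every estimate above must be made effective and uniform over the relevant range $d\ge 4$, because $r_d(n)$ must be explicitly bounded in \S\ref{bounds} to feed the later computations. Concretely, the inner Laplace step requires quantitative control of the tail of the $k$-sum and of the Euler–Maclaurin errors, with constants depending only on $\alpha$ (hence on $d$), and these must then be propagated cleanly through the outer saddle-point integral and the minor-arc bound; this is the content of \S\ref{bounds}, and nothing beyond \cite{AJLO} is needed at the level of ideas.
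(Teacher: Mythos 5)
Your proposal is correct and is exactly the route the paper intends: the paper itself omits a proof of Theorem \ref{q2asymptotics}, deferring to Alfes--Jameson--Lemke Oliver \cite[Thm.~3.1]{AJLO}, Meinardus \cite{Meinardus_partitions}, and Duncan et al.\ \cite[Thm.~6.5]{DKT_proc}, and your two-scale (inner Laplace in $k$, outer saddle in $t$) Meinardus argument, with the generating-function exponent shifted from $k$ to $2k$ and the resulting extra $\alpha^{-1}$ tracked through the prefactor to turn $\alpha^{d-1}$ into $\alpha^{d-3}$, is precisely what those references carry out. The saddle condition $1-\alpha=\alpha^{d}$, the collapse of the $1/t$-coefficient to $A_d=\tfrac d2\log^2\alpha+\operatorname{Li}_2(\alpha^d)$ via the dilogarithm reflection formula, and the emphasis that all estimates must be made explicit and uniform so that $r_d(n)$ can feed into \S\ref{bounds}, all match the cited treatment.
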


As this arises from a straightforward generalization of Alfes et al. \cite[Thm. 3.1]{AJLO} using work of Meinardus \cite{Meinardus_partitions} and is previously described by Duncan et al. \cite[Thm. 6.5]{DKT_proc}, we omit a proof. 

\subsection{Proof of the $d=1$ case of Theorem \ref{fullKP}} \label{ss:d=1}

To show that $\Delta^{(2,-)}_1(n) \geq 0$ for all positive integers $n$ we use a result of Duncan et al. \cite[Lemma 2.4]{DKST} which states that for $a,d \geq 1$, and $n \geq d+2a$,

\begin{equation}\label{DKSTlem2.4}
q_d^{(a)}(n) \geq q_{\left\lceil \frac{d}{a}\right\rceil}^{(1)} \left( \left\lceil \frac{n}{a} \right\rceil \right).
\end{equation}

\begin{proof}[Proof of Theorem \ref{fullKP}, d=1] First, when $n$ is odd, $Q_1^{(2,-)}(n)$ is trivially zero and we are done. Now suppose $n$ is even.  By \eqref{DKSTlem2.4}, when $n\geq 5$, $q_1^{(2)}(n)\geq q_1^{(1)}\left(\frac{n}{2}\right)$. And by Euler's partition identity, $q_1^{(1)}\left(\frac{n}{2}\right)=Q_1^{(1)}\left(\frac{n}{2}\right)$.  Moreover $Q_1^{(1)}\left(\frac{n}{2}\right)=Q_1^{(2)}(n)$, since the partitions of $\frac{n}{2}$ into parts congruent to $\pm 1$ modulo 4 are in bijection with partitions of $n$ into parts congruent to $\pm 2$ modulo $4$.  Putting this together, we have for even $n\geq 6$,
\begin{equation*}
    q_1^{(2)}(n)\geq q_1^{(1)}\left(\frac{n}{2}\right)=Q_1^{(1)}\left(\frac{n}{2}\right)=Q_1^{(2)}(n)\geq Q_1^{(2,-)}(n).
\end{equation*}
A quick check that $q_1^{(2)}(n)\geq Q_1^{(2,-)}(n)$ when $n=2,4$ completes the proof.
\end{proof}

We note that when $d=3$ this method fails for even $n$.  Using \eqref{DKSTlem2.4} and \eqref{RR1} we obtain for $n\geq 7$,
\[
q_3^{(2)}(n)\geq q_2^{(1)}\left(\frac{n}{2}\right)=Q_2^{(1)}\left(\frac{n}{2}\right).
\]
But here $Q_2^{(1)}(\frac{n}{2})=Q_{7}^{(2)}(n)$, since partitions of $\frac{n}{2}$ into parts congruent to $\pm 1$ modulo $5$ are in bijection with partitions of $n$ into parts congruent to $\pm 2$ modulo $10$.  However, $Q_{7}^{(2)}(n)\leq Q_{3}^{(2,-)}(n)$ by Duncan et al. \cite[Lemma 2.2]{DKST}.

Indeed, this method fails to generalize to higher values of odd $d$, and doesn't work for even $d$ when $n$ is odd, so we return to asymptotic methods to approach \eqref{KP_conj}.  

\subsection{Some useful estimates}

To obtain the bounds $N(d)$ in Section \ref{bounds} we follow the approach of Alfes et al. \cite{AJLO}, which require specific estimation of error terms of $q_d^{(2)}(n)$.  The estimates in the following lemma are used in Section \ref{bounds}. 

Recall the Hurwitz zeta function, $\zeta(s,a)$, is defined when $\sigma >1$ and $a\neq 0,-1,-2,\dots$ by 
\begin{equation}\label{Hzeta}
    \zeta(s,a)=\sum_{n=0}^{\infty}\frac{1}{(n+a)^s},
\end{equation} 
and is analytically continued to a meromorphic function on $\C$ having a single pole of order 1 at $s=1$ with residue $1$.

\begin{lemma}\label{q2preliminaries}
    Let $\alpha$ be the unique real solution of $x^d+x-1=0$ in the interval $(0,1)$, and let $A_d:=\frac{d}{2}\log ^2\alpha+\sum_{r=1}^{\infty}\frac{\alpha^{rd}}{r^2}$.  Set $\rho=\alpha^d=1-\alpha$, $0<\xi<1$, $0<\varepsilon<\frac{1}{2}$, and $x<\beta$ where
    \begin{equation*}
        \beta:=\min \left(\frac{-\pi}{\log\rho}\xi,\ \frac{2\alpha^{2-d}}{\pi d},\ \frac{1}{2d}+\rho\left(\frac{1}{2}-\frac{\pi^2}{24}\right)\right)^{\frac{1}{\varepsilon}}.
    \end{equation*}
    Then, 
    \begin{equation*}
        f_1(x):=(1+x^{2\varepsilon})^{\frac{1}{4}}2^{-\frac{1}{2}}\pi^{-\frac{3}{2}}\zeta\left(\frac{3}{2},2\right) \left(\frac{\rho}{1-\rho}\right) \left( \frac{1}{\frac{\pi}{2}-\arctan x^{\varepsilon}}\right)
    \end{equation*}
    and 
    \begin{multline*}
    f_2(x):=e^{\frac{d|x|}{8}}\left[ e^{\frac{dx\sqrt{1+x^{2\varepsilon}}}{8}} -1+\frac{2\exp\left(-\frac{4\pi^2(1-\xi)}{dx(1+x^{2\varepsilon})}\right)}{1-\exp\left(-\frac{2\pi^2(1-\xi)}{dx(1+x^{2\varepsilon})}\right)}\right] \\
     +2\exp\left( \frac{2\pi(|\rho|-\pi)}{dx(1+x^{2\varepsilon})}-\frac{2\log\rho}{d}x^{\varepsilon-1}+\frac{d|x|}{8}\right)
    \end{multline*}
are bounded when $x=\sqrt{\frac{A_d}{n}}$ as functions of $n\in \N$ by the explicit constant upper bounds given in \eqref{F1} and \eqref{F2} respectively. 
\end{lemma}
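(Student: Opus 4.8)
The plan is to establish boundedness of $f_1$ and $f_2$ by showing each is a continuous function on a compact parameter set, or more precisely by tracking the behavior as $n \to \infty$ (equivalently $x = \sqrt{A_d/n} \to 0^+$) and as $n$ decreases toward the smallest admissible integer value. First I would record the elementary facts that $\alpha \in (0,1)$ is well-defined and varies continuously with $d$, that $\rho = \alpha^d = 1-\alpha \in (0,1)$, and that $A_d > 0$; consequently, for each fixed $d$ in the relevant range, $x = \sqrt{A_d/n}$ is a positive decreasing function of $n \in \N$ taking values in $(0, \sqrt{A_d}\,]$, and the constraint $x < \beta$ restricts attention to $n$ exceeding some explicit threshold $n_0(d)$. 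The whole statement then reduces to: on the interval $x \in (0, \min(\sqrt{A_d}, \beta))$, both $f_1(x)$ and $f_2(x)$ are bounded, and one must exhibit the explicit constants in \eqref{F1} and \eqref{F2}.

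For $f_1$, the key observation is that each factor is monotone in $x$ on $(0,\beta)$: the factor $(1+x^{2\varepsilon})^{1/4}$ is increasing and bounded above by $(1+\beta^{2\varepsilon})^{1/4}$; the quantity $\frac{\pi}{2} - \arctan x^{\varepsilon}$ is decreasing in $x$ and bounded below by $\frac{\pi}{2} - \arctan \beta^{\varepsilon} > 0$, so its reciprocal is bounded above; and the remaining constants $2^{-1/2}\pi^{-3/2}\zeta(3/2,2)$ and $\rho/(1-\rho) = (1-\alpha)/\alpha$ depend only on $d$. Multiplying these bounds gives the constant in \eqref{F1}. For $f_2$, I would bound each of the three exponential-type summands separately: the term $e^{dx\sqrt{1+x^{2\varepsilon}}/8} - 1$ is increasing in $x$ and hence at most $e^{d\beta\sqrt{1+\beta^{2\varepsilon}}/8} - 1$; the term $\frac{2\exp(-c/x(1+x^{2\varepsilon}))}{1-\exp(-c/2x(1+x^{2\varepsilon}))}$ with $c = 4\pi^2(1-\xi)/d$ is handled by noting $x(1+x^{2\varepsilon}) \le \beta(1+\beta^{2\varepsilon})$, so the argument of the exponential is bounded away from $0$ and the whole expression is bounded (the denominator bounded away from zero by the constraint on $\beta$ involving $\xi$, which is precisely why that term enters the definition of $\beta$); and the final summand is treated similarly after checking that $\frac{2\pi(|\rho| - \pi)}{dx(1+x^{2\varepsilon})}$ is negative (since $\rho < 1 < \pi$) and bounded, while $-\frac{2\log\rho}{d} x^{\varepsilon - 1}$ — with $\varepsilon - 1 < 0$ and $-\log\rho > 0$ — is the dangerous piece, but $x^{\varepsilon-1}$ grows only as $x \to 0$, i.e. as $n \to \infty$, where the competing outer factor $e^{d|x|/8} \to 1$, so one must verify the product $\exp(-\frac{2\log\rho}{d}x^{\varepsilon-1} + \frac{d|x|}{8})$ stays bounded. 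The outer prefactor $e^{d|x|/8} \le e^{d\beta/8}$ throughout.

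The main obstacle will be the term $\exp\left(-\frac{2\log\rho}{d}x^{\varepsilon-1}\right)$ in $f_2$, since $x^{\varepsilon - 1} \to +\infty$ as $x \to 0^+$ and $-\log\rho > 0$, so naively this factor blows up. The resolution must be that this term does not in fact appear in isolation but multiplied by the other exponential factor $\exp\left(\frac{2\pi(|\rho|-\pi)}{dx(1+x^{2\varepsilon})}\right)$, whose exponent tends to $-\infty$ like $-C/x$ as $x \to 0^+$; since $1/x$ dominates $x^{\varepsilon - 1} = 1/x^{1-\varepsilon}$ for $0 < \varepsilon < 1/2$ (indeed $1 > 1 - \varepsilon$), the combined exponent $\frac{2\pi(|\rho|-\pi)}{dx(1+x^{2\varepsilon})} - \frac{2\log\rho}{d}x^{\varepsilon-1}$ tends to $-\infty$, so the product is bounded (in fact tends to $0$). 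I would make this quantitative by writing the combined exponent as a function of $t = 1/x \in (1/\beta, \infty)$, showing it is of the form $-a t + b t^{1-\varepsilon}$ with $a, b > 0$, and noting such a function attains a finite maximum on $\R_{>0}$ at $t^* = \left(\frac{b(1-\varepsilon)}{a}\right)^{1/\varepsilon}$, yielding an explicit constant. The remaining verifications — that $\beta$ as defined makes all the denominators positive and bounds the relevant arguments — are the routine bookkeeping encoded in the three-term minimum defining $\beta$, and the final constants in \eqref{F1} and \eqref{F2} are obtained by assembling the pieces; I would defer the precise numerics to Section \ref{bounds} where $\xi$, $\varepsilon$, and $d$ are pinned down.
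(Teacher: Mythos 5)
Your proposal is correct in substance and identifies the one genuinely delicate point — the factor $\exp\bigl(-\tfrac{2\log\rho}{d}x^{\varepsilon-1}\bigr)$, which blows up as $x\to 0^+$ and is tamed only by the companion term $\tfrac{2\pi(|\rho|-\pi)}{dx(1+x^{2\varepsilon})}\sim -C/x$ — but your route through $f_2$ differs from the paper's. The paper does not bound the three summands separately or optimize the combined exponent: it argues that $f_2(x)$ is increasing in $x$ on the relevant range (each bracketed piece increasing, and for the last exponential the two increasing terms dominating the decreasing one), so that $f_2(\sqrt{A_d/n})$ is decreasing in $n$ and is therefore bounded by its value at $n=1$, giving the single clean constant $F_2:=f_2(\sqrt{A_d})$; likewise $F_1$ is just $f_1$ evaluated at $n=1$, i.e.\ at $x=\sqrt{A_d}$, rather than at $x=\beta$ as you propose. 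Your term-by-term treatment, with the explicit maximization of $-at+bt^{1-\varepsilon}$ at $t^*=(b(1-\varepsilon)/a)^{1/\varepsilon}$, is arguably more careful than the paper's monotonicity assertion (a sum of increasing and decreasing functions is not automatically increasing, so your calculus step is what actually justifies that claim quantitatively), but it buys you a different, generally larger constant assembled from separate suprema. Since the lemma asserts boundedness \emph{by the specific constants} \eqref{F1} and \eqref{F2}, and these constants feed directly into the thresholds $N(d)$ in Section \ref{bounds}, you would either need to upgrade your argument to the global monotonicity statement (your derivative comparison of $C/x^2$ against $b(1-\varepsilon)x^{\varepsilon-2}$ is exactly the ingredient needed) or accept weaker constants and correspondingly larger $N(d)$; as a proof of boundedness per se, your argument is complete once the deferred numerics are filled in.
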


\begin{proof}
    First, we bound $f_1(x)$. Substituting $x=\sqrt{\frac{A_d}{n}}$ gives
    \begin{equation*}
    f_1\left(\sqrt{\frac{A_d}{n}}\right)= (1+A_d^{\varepsilon}n^{-\varepsilon})^{\frac{1}{4}}   2^{-\frac{1}{2}}    \pi^{-\frac{3}{2}}\zeta\left(\frac{3}{2},2\right) \left(\frac{\rho}{1-\rho}\right) \left(\frac{1}{\frac{\pi}{2}-\arctan(A_d^{\frac{\varepsilon}{2}}n^{-\frac{\varepsilon}{2}})}\right).   
    \end{equation*}
    Noting that $(1+A_d^{\varepsilon}n^{-\varepsilon})^{\frac{1}{4}}$ and $(\frac{\pi}{2}-\arctan(n^{-\frac{\color{blue}\varepsilon\color{black}}{2}}A_d^{\frac{\varepsilon}{2}}))^{-1}$ are decreasing in $n$, and $n\geq 1$, we obtain the constant bound $F_1$ given by
    \begin{equation}\label{F1}
        f_1\left(\sqrt{\frac{A_d}{n}}\right)\leq F_1:=(1+A_d^{\varepsilon})^{\frac{1}{4}}   2^{-\frac{1}{2}}    \pi^{-\frac{3}{2}}\zeta\left(\frac{3}{2},2\right) \left(\frac{\rho}{1-\rho}\right) \left(\frac{1}{\frac{\pi}{2}-\arctan(A_d^{\frac{\varepsilon}{2}})}\right).
    \end{equation}

    Next, we show that $f_2(x)$ is increasing in $x$ so that $f_2(\sqrt{\frac{A_d}{n}})$ is decreasing in $n$.  First, observe that as $x$ increases, $-\frac{4\pi^2(1-\xi)}{dx(1+x^{2\varepsilon})}$ increases towards $0$, and thus $\exp\left(-\frac{4\pi^2(1-\xi)}{dx(1+x^{2\varepsilon})}\right)$ increases towards $1$.  Similarly, in the denominator, $1-\exp\left(\frac{2\pi^2(1-\xi)}{dx(1+x^{2\varepsilon})}\right)$ decreases toward $0$ as $x$ increases.  As $e^{\frac{d|x|}{8}}$ and $e^{\frac{2x\sqrt{1+x^{2\varepsilon}}}{8}}$ are increasing functions of $x$ we have that 
    \begin{equation*}
        e^{\frac{d|x|}{8}}\left[ e^{\frac{dx\sqrt{1+x^{2\varepsilon}}}{8}} -1+2\frac{\exp\left(-\frac{4\pi^2(1-\xi)}{dx(1+x^{2\varepsilon})}\right)}{1-\exp\left(-\frac{2\pi^2(1-\xi)}{dx(1+x^{2\varepsilon})}\right)}\right] 
    \end{equation*} is an increasing function of $x$ as well. 
    
    Observing that $\rho \in (0,1)$ we see that both $\frac{2\pi(|\rho |-\pi)}{dx(1+x^{2\varepsilon})}$ and $\frac{d|x|}{8}$ are increasing in $x$, and $-\frac{2\log \rho}{d}x^{\varepsilon-1}$ is decreasing toward $0$. Thus, 
    \begin{equation*}
        2\exp\left( \frac{2\pi(|\rho|-\pi)}{dx(1+x^{2\varepsilon})}-\frac{2\log\rho}{d}x^{\varepsilon-1}+\frac{d|x|}{8}\right)
    \end{equation*}
    is an increasing function of $x$.  Together, we have
    \begin{equation}\label{F2}
        f_2\left(\sqrt{\frac{A_d}{n}}\right)\leq F_2:=f_2(\sqrt{A_d}).
    \end{equation}
\end{proof}

\section{Proof of Theorem \ref{Q2asymptotics}}\label{asymptotics}

For this section we write $\tau=y+2\pi i x$ where $y=Re(\tau)>0$, and set $q=e^{-\tau}=e^{-y-2\pi ix}$. Our proof follows the method of Alfes et al. \cite[Thm. 2.1]{AJLO}.  Write
\begin{equation}\label{def f(tau)}
f(\tau)=\!\!\! \prod_{\substack{n\geq 1 \\ n\equiv \pm 2(d+3)}} \!\!\! \frac{1}{(1-q^n)}=1+\sum_{n=1}^{\infty}Q_{d+3}^{(2)}(n)q^n.
\end{equation} 
Then we associate to $f$ the following Dirichlet series in $s=\sigma + it$,
\begin{equation}\label{Ddef}
D(s)= \!\!\! \sum_{\substack{n\geq 1 \\ n\equiv \pm 2(d+3)}} \!\!\! \frac{1}{n^s},
\end{equation} 
which converges for $\sigma >1$.  We can write $D(s)$ directly in terms of the Hurwitz zeta function $\zeta(s,a)$, which is defined when $\sigma >1$ and $a\neq 0,-1,-2,\dots$ by 
\begin{equation*}
    \zeta(s,a)=\sum_{n=0}^{\infty}\frac{1}{(n+a)^s},
\end{equation*} 
and is analytically continued to a meromorphic function on $\C$ with a single pole of order 1 at $s=1$ with residue $1$.  Moreover, $\zeta(s,a)$ satisfies the identities $\zeta(0,a)=\frac{1}{2}-a$ (see Apostol \cite{Apostol}) and $\zeta'(0,a) = \log(\Gamma(a)) - \frac12\log(2\pi)$ (see Lerch \cite{whittaker1927course}).

We see directly that
\begin{equation*}
    D(s)=(d+3)^{-s}\left(\zeta\left(s,\frac{2}{d+3}\right) +\zeta\left(s,\frac{d+1}{d+3}\right)\right).
\end{equation*}
Thus, $D(s)$ can be analytically continued to a meromorphic function on $\C$ with a single pole of order 1 at $s=1$ with residue $\frac{2}{d+3}$.  Moreover, $D(0)=0$, and using the reflection formula for the $\Gamma$-function, we observe that 
\begin{equation} \label{eq:D'(0)}
 D'(0)  = \zeta'\left(0,\frac{2}{d+3}\right)+\zeta'\left(0,\frac{d+1}{d+3}\right) = \log\bigg(\frac{1}{2\sin\left( \frac{2\pi}{d+3}\right)} \bigg).
\end{equation}

It will be useful to define the following function $g(\tau)$, which depends on $d$, by
\begin{equation*}
    g(\tau)= \!\!\! \sum_{\substack{n\equiv \pm 2(d+3)\\ n\geq 0}} \!\!\! q^n.
\end{equation*}
The following lemma will be needed in our proof of Theorem \ref{Q2asymptotics}.

\begin{lemma}\label{lemma Ti}
Let $\tau=y+2\pi i x$ with $y=Re(\tau)>0$, and $q=e^{-\tau}=e^{-y-2\pi ix}$.  If $\arg(\tau)>\frac{\pi}{4}$ and $|x|\leq \frac{1}{2},$ then for $d\geq 4$ even,
\begin{equation*}
    Re(g(\tau))-g(y)\leq -c_2y^{-1},
\end{equation*} where $c_2$ is an explicit constant depending only on $d$.
\end{lemma}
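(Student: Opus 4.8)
The plan is to work with the series for $g$ and reduce the claim to a lower bound for a sum of squares. With $S:=\{n\ge 1:\ n\equiv\pm 2\pmod{d+3}\}$ (note $0\notin S$ since $d+3\nmid 2$, so the two smallest elements of $S$ are $2$ and $d+1$) and $q=e^{-y-2\pi i x}$, one has
\[
g(y)-Re(g(\tau))=\sum_{n\in S}e^{-ny}\bigl(1-\cos(2\pi nx)\bigr)=2\sum_{n\in S}e^{-ny}\sin^2(\pi nx),
\]
so it suffices to bound $\Sigma:=\sum_{n\in S}e^{-ny}\sin^2(\pi nx)$ below by $\tfrac12 c_2 y^{-1}$. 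I would first use the hypotheses to confine $\tau$: since $y>0$, the condition $\arg(\tau)>\tfrac{\pi}{4}$ forces $x>0$ and $2\pi x>y$, which together with $x\le\tfrac12$ gives $0<y<\pi$. I would then fix an explicit threshold $y_0=y_0(d)>0$ (its size dictated by the estimates below, and which will need to be of order $(d+3)^{-3}$) and treat $y\ge y_0$ and $0<y<y_0$ separately.

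For $y_0\le y<\pi$ the pair $(x,y)$ lies in the compact set $\{y_0\le y\le\pi,\ \tfrac{y}{2\pi}\le x\le\tfrac12\}$, and here I would retain only the two smallest terms of $\Sigma$, namely $n=2$ and $n=d+1$, to get $\Sigma\ge e^{-(d+1)\pi}\bigl(\sin^2(2\pi x)+\sin^2((d+1)\pi x)\bigr)$. Splitting the $x$-interval into the subregions where $x$ is near $0$, in the middle, or near $\tfrac12$, one of the two sines is bounded away from $0$ by an explicit constant (the endpoint $x=\tfrac12$ causes no difficulty, because $d+1$ is odd and so $\sin^2((d+1)\pi/2)=1$); this yields a constant lower bound for $\Sigma$, which, since $y\le\pi$, is of the form $\Sigma\ge(\text{const})\,y^{-1}$ on this range.

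The substantive range is $0<y<y_0$, where I would split on $\ell:=\mathrm{dist}\bigl((d+3)x,\Z\bigr)$, the essential point being that the cutoff between ``$\ell$ large'' and ``$\ell$ small'' is taken proportional to $y$. If $\ell\ge 4(d+3)y$: truncate $\Sigma$ at $N=\lfloor 1/y\rfloor$ (so $e^{-ny}\ge e^{-1}$ for $n\le N$) and use $\sum_{n\in S,\,n\le N}\sin^2(\pi nx)=\tfrac12|S\cap[1,N]|-\tfrac12 Re\sum_{n\in S,\,n\le N}e^{2\pi i n x}$; each of the two residue-class geometric sums has modulus at most $|\sin(\pi(d+3)x)|^{-1}\le(2\ell)^{-1}$, so it is dominated by the main term $\tfrac12|S\cap[1,N]|\asymp\tfrac{1}{(d+3)y}$ and $\Sigma\gtrsim\tfrac{1}{(d+3)y}$. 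If $\ell<4(d+3)y$: then $(d+3)x=j+\delta$ for a nearest integer $j$ with $|\delta|<4(d+3)y$, so $x=\tfrac{j}{d+3}+\eta$ with $|\eta|<4y$. When $j\ge 1$, for $n\in S$ one has $\sin^2(\pi nx)=\sin^2\bigl(\tfrac{2\pi j}{d+3}\pm\pi n\eta\bigr)\ge\sin^2\bigl(\tfrac{\pi}{d+3}\bigr)-\pi n|\eta|$, which is $\ge\tfrac12\sin^2(\tfrac{\pi}{d+3})$ once $n\lesssim\sin^2(\tfrac{\pi}{d+3})/y$; crucially this cutoff is still of order $1/y$, so $S$ contributes $\asymp\tfrac{1}{(d+3)y}$ such $n$ and $\Sigma\gtrsim\tfrac{1}{(d+3)y}$. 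When $j=0$, $x$ is small ($x<\min(4y,\tfrac{1}{2(d+3)})$), so $\sin^2(\pi nx)\ge 4n^2x^2$ for $n\le\tfrac{1}{2x}$; summing $n^2$ over $n\in S$ with $n\le\tfrac{1}{8y}$ gives $\asymp\tfrac{1}{(d+3)y^3}$, whence $\Sigma\gtrsim x^2/((d+3)y^3)\gtrsim 1/((d+3)y)$ using $x\ge y/(2\pi)$. Tracking the constants through these four cases and taking $c_2$ to be their minimum finishes the proof.

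The main obstacle I anticipate is the ``$\ell$ small'' sub-cases. Two tempting shortcuts both fail: estimating $\sin^2(\pi nx)$ only for very small $n$ (say $n\ll d+3$) loses all the $y^{-1}$ growth, and bounding $g(\tau)$ crudely by $|q^2+q^{d+1}|/|1-q^{d+3}|$ degenerates exactly where $(d+3)x\in\Z$. The remedy is precisely to choose the large/small cutoff for $\ell$ to be a constant multiple of $y$, so that the truncation $n\lesssim 1/y$ always leaves order $1/y$ genuinely useful terms of $S$; making this uniform while keeping every constant explicit is the delicate part. A lesser nuisance is producing an explicit (rather than merely a compactness-based) positive lower bound on the range $y_0\le y<\pi$.
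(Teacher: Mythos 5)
Your proposal is correct in outline and takes a genuinely different route from the paper. You work directly from the series representation, using the identity $g(y)-\mathrm{Re}\,g(\tau)=2\sum_{n\in S}e^{-ny}\sin^2(\pi nx)$ (with $S$ the positive integers $\equiv\pm2\pmod{d+3}$) to reduce the lemma to a quantitative lower bound for a weighted exponential sum, which you then attack via geometric-sum cancellation and a nearest-integer dichotomy in the spirit of classical exponential-sum estimates. The paper instead first sums the geometric series to get $g(\tau)=(e^{(d+1)\tau}+e^{2\tau})/(e^{(d+3)\tau}-1)$, expands $-y(\mathrm{Re}\,g(\tau)-g(y))$ as a sum $T_1+T_2+T_3$ of three manifestly nonnegative rational expressions, and in each regime bounds one $T_i$ below by calculus (monotonicity in $y$, endpoint evaluation, and a second-order Taylor estimate near the zeros of $1-\cos(2\pi(d+3)x)$). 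The two case decompositions are structurally parallel: both split at a $y$-threshold and then, for small $y$, on the distance $\ell$ of $(d+3)x$ from the nearest integer ($|x-k/(d+3)|\lessgtr y/(d+3)$ in the paper, $\ell\lessgtr 4(d+3)y$ in yours), and the essential role of $d$ even shows up in the paper as the nonvanishing of the $T_i$ at $|x|=1/2$ and in yours as $d+1$ odd together with $\gcd(2,d+3)=1$. Your route is more transparent about where the $y^{-1}$ growth comes from (truncating at $n\lesssim 1/y$ leaves $\asymp 1/((d+3)y)$ genuinely useful terms) and avoids manipulating the unwieldy $T_i$, while the paper's route yields the closed-form lower bounds \eqref{bound1}--\eqref{bound4} directly, which is convenient for the later explicit computations. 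Two small points to repair in a full write-up: the step justified by ``since $y\le\pi$'' should instead invoke $y\ge y_0$ (you need $\Sigma\ge c'\ge c'y_0\,y^{-1}$, which uses the lower bound on $y$, not the upper one); and, since the hypothesis is really $|\arg\tau|>\pi/4$, you should say explicitly that you reduce to $x>0$ by the evenness of $\Sigma$ in $x$, mirroring the paper's reduction.
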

\begin{proof}
Note that
\begin{multline*}
g(\tau)=\sum_{n\geq 0}q^{(d+3)n+2}+\sum_{n\geq 0}q^{(d+3)n+d+1}
=(q^2+q^{d+1})\sum_{n\geq 0}(q^{(d+3)})^n\\
=\frac{q^2+q^{(d+1)}}{1-q^{d+3}}
=\frac{e^{-2\tau}+e^{-(d+1)\tau}}{1-e^{-(d+3)\tau}}
=\frac{e^{(d+1)\tau}+e^{2\tau}}{e^{(d+3)\tau}-1}.
\end{multline*}
Plugging in $\tau=y+2\pi i x$, we expand $g(\tau)$ as 
\begin{align*}
    g(\tau) = \frac{e^{(d+1)\tau}+e^{2\tau}}{e^{(d+3)\tau}-1}&=\frac{e^{(d+1)(y+2\pi i x)}+e^{2(y+2\pi i x)}}{e^{(d+3)(y+2\pi i x)}-1}\cdot \frac{e^{(d+3)(y-2\pi i x)}-1}{e^{(d+3)(y-2\pi i x)}-1}\\
    &=\frac{e^{(2d+4)y-2(2\pi i x)}+e^{(d+5)y-(d+1)(2\pi i x)}-(e^{(d+1)(y+2\pi i x)}+e^{2(y+2\pi i x)})}{e^{2(d+3)y}-2e^{(d+3)y}\cos (2\pi x(d+3))+1}.
\end{align*}
Thus, \begin{equation*}
   Re(g(\tau))=\frac{\left(e^{(2d+4)y}-e^{2y}\right)\cos(4\pi  x)+\left(e^{(d+5)y}-e^{(d+1)y}\right)\cos(2\pi (d+1)x)}{e^{2(d+3)y}-2e^{(d+3)y}\cos (2\pi x(d+3))+1}. 
\end{equation*}
Now, consider $-y(Re(g(\tau))-g(y)).$ Expanding, we find that
\begin{equation*}
    -y(Re(g(\tau))-g(y))=T_1+T_2+T_3
\end{equation*} where
\begin{equation*}
    T_1:=\frac{(1-\cos(4\pi x))(e^{(3d+7)y}+e^{2y}-e^{(2d+4)y}-e^{(d+5)y})}{\left(\frac{e^{(d+3)y}-1}{y}\right)\left(e^{2(d+3)y}-2e^{(d+3)y}\cos (2\pi x(d+3))+1\right)},
\end{equation*}

\begin{equation*}
    T_2:=\frac{(1-\cos(2\pi(d+1)x))(e^{(2d+8)y}+e^{(d+1)y}-e^{(2d+4)y}-e^{(d+5)y})}{\left(\frac{e^{(d+3)y}-1}{y}\right)\left(e^{2(d+3)y}-2e^{(d+3)y}\cos (2\pi x(d+3))+1\right)},
\end{equation*}

and 

\begin{equation*}
    T_3:=\frac{ 2(1-\cos(2\pi(d+3)x))(e^{(2d+4)y}+e^{(d+5)y})}{\left(\frac{e^{(d+3)y}-1}{y}\right)\left(e^{2(d+3)y}-2e^{(d+3)y}\cos (2\pi x(d+3))+1\right)}.
\end{equation*}
It follows that $T_1, T_2, T_3$ are all nonnegative. Furthermore, letting $y\rightarrow 0,$ we obtain that $T_1\rightarrow 0,$ $T_2\rightarrow 0$, and $T_3\rightarrow \frac{2}{d+3}$.  Thus to bound $T_1+T_2+T_3$ away from $0$ it suffices to bound any one of $T_1, T_2, T_3$ away from $0$. We observe that it is necessary for $d$ to be even, since if $d$ is odd, then when $|x|=\frac{1}{2}$ each of $T_1,T_2,T_3$ are simultaneously zero.  Since each of $T_1,T_2,T_3$ is an even function in $x$, we may assume $x$ is nonnegative.  Further, since $\arg (\tau)>\frac{\pi}{4}$, we know that $y<2\pi x$.

{\bf Case 1.}  Suppose that $\frac{d+4}{2(d+5)} \leq x \leq \frac12$.  Since $d$ is even and $d+2 \leq \frac{(d+4)(d+3)}{d+5}$, we have for $x$ in this range that $\cos(2(d+3)\pi x)$ is decreasing.  Thus $1-\cos(2(d+3)\pi x)$ is increasing and has its minimum at $x=\frac{d+4}{2(d+5)}$.  Since $d$ is even, $\cos(\frac{(d+3)(d+4)}{(d+5)}\pi) = \cos(\frac{2\pi}{d+5})$, so it follows that 
\[
T_3 \geq \frac{2y(1-\cos(\frac{2\pi}{d+5}))(e^{(2d+4)y}+e^{(d+5)y})}{(e^{(d+3)y}-1)(e^{(d+3)y}+1)^2}.
\]
This bound is decreasing as a function in $y$.  Since $y<2\pi x \leq \pi$, replacing $y$ with $\pi$ gives the following explicit bound on $T_3$ in this case,
\begin{equation}\label{bound1}
T_3 \geq \frac{2\pi(1-\cos(\frac{2\pi}{d+5}))(e^{(2d+4)\pi}+e^{(d+5)\pi})}{(e^{(d+3)\pi}-1)(e^{(d+3)\pi}+1)^2}.
\end{equation}

{\bf Case 2.}  Suppose that $0<\frac{y}{2\pi} < x <\frac{d+4}{2(d+5)}$, and $\frac{\pi}{d+3}\leq y \leq \pi$.  Then $\frac{1}{2(d+3)} < x < \frac{d+4}{2(d+5)}$, and $1-\cos(4\pi x)$ has its minimum in this range at $x=\frac{d+4}{2(d+5)}$.  Since $\cos(\frac{2(d+4)\pi}{d+5}) = \cos (\frac{2\pi}{d+5})$, it follows that
\[
T_1 \geq \frac{y (1-\cos(\frac{2\pi}{d+5}))(e^{(3d+7)y}+e^{2y}-e^{(2d+4)y}-e^{(d+5)y})}{(e^{(d+3)y}-1)(e^{(d+3)y}+1)^2}.
\]
This bound is not decreasing as a function in $y$ in this range, instead it increases and then decreases.  So the mimimum will occur at either $y=\frac{\pi}{d+3}$, or $y=\pi$.  Subtracting the above bound's value at $y=\pi$ from the value at $y=\frac{\pi}{d+3}$ yields an increasing function in $d$ which is positive at $d=4$, thus we conclude that the minimum occurs at $y=\pi$.  Thus, 
\begin{equation} \label{bound2}
T_1 \geq \frac{\pi (1-\cos(\frac{2\pi}{d+5}))(e^{(3d+7)\pi}+e^{2\pi}-e^{(2d+4)\pi}-e^{(d+5)\pi})}{(e^{(d+3)\pi}-1)(e^{(d+3)\pi}+1)^2}.
\end{equation}

{\bf Case 3.}  Suppose that $0<\frac{y}{2\pi} < x <\frac{d+4}{2(d+5)}$, and $0 < y < \frac{\pi}{d+3}$.  Observe that $1-\cos(2(d+3)\pi x)$ is zero exactly when $x=\frac{k}{d+3}$ for some integer $k$.  For fixed $x$, let $k$ denote the integer that minimizes $|x-\frac{k}{d+3}|$, i.e., $\frac{k}{d+3}$ is the zero of $1-\cos(2(d+3)\pi x)$ that is closest to $x$.  Since the zeros are $\frac{1}{d+3}$ apart, it must be that $|x-\frac{k}{d+3}|\leq \frac{1}{2(d+3)}$. 

Suppose that $|x-\frac{k}{d+3}|< \frac{y}{d+3}$.  In this case we can use $T_1$ to obtain a bound. From the periodicity of the cosine function, $1-\cos\frac{4\pi(d+3-\pi)}{(d+3)^2}$ is a lower bound for $1-\cos(4\pi x)$ where $|x-\frac{k}{d+3}|<\frac{\pi}{d+3}$. Thus, 
\begin{equation*}
    T_1\geq \frac{\pi\left(1-\cos\frac{4\pi(d+3-\pi)}{(d+3)^2}\right)\left(e^{(3d+7)y}+e^{2y}-e^{(2d+4)y}-e^{(d+5)y})\right)}{(e^{(d+3)y}-1)(e^{2(d+3)y}-2e^{(d+3)y}\cos(2\pi x(d+3))+1)}.
\end{equation*}
Now, using the $2^{nd}$ order Taylor series expansion of $\cos(2\pi x(d+3))$ about $\frac{k}{d+3}$ and using the error estimate for alternating series we can bound further
\begin{equation*}
    T_1\geq \frac{\pi\left(1-\cos\frac{(d+3)-\pi}{(d+3)^2}\right)\left(e^{(3d+7)y}+e^{2y}-e^{(2d+4)y}-e^{(d+5)y}\right)}{\left(e^{\pi(d+3)}-1\right)\left(\left(e^{(d+3)y}-1\right)^2+8\pi^2y^2e^{(d+3)y}\right)}.
\end{equation*}
As a function of $y$, the above function is increasing, so we take the limit as $y$ approaches $0$ and find
\begin{equation} \label{bound3}
    T_1\geq \frac{\pi\left(1-\cos\frac{(d+3)-\pi}{(d+3)^2}\right)\left(2(d+3)(d+1)\right)}{\left(e^{\pi(d+3)}-1\right)\left(8\pi^2+(d+3)^2\right)}.
\end{equation}

Now suppose that $|x-\frac{k}{d+3}|\geq \frac{y}{d+3}$.  In this case we can use $T_3$ to obtain a bound.  Let $u=2\pi(d+3)|x-\frac{k}{d+3}|$.  Then $2\pi y \leq u$ and for $0<x<\frac{d+4}{2(d+5)}$ we have $0< u \leq \pi$.  Thus, in this range $0 < y \leq \frac12$.  Moreover, $\cos(2\pi(d+3)x)=\cos(u)$.  So, 
\begin{align*}
T_3 & =\frac{ 2y(1-\cos(u))(e^{(2d+4)y}+e^{(d+5)y})}{(e^{(d+3)y}-1)(e^{2(d+3)y}-2e^{(d+3)y}\cos (u)+1)} \\
& \geq \frac{ 4y(1-\cos(u))}{(e^{(d+3)y}-1)((e^{(d+3)y}-1)^2+2(1-\cos (u))e^{(d+3)y})} \\
& \geq \frac{4\pi(1-\cos(u))}{(e^{(d+3)\pi}-1)((e^{\frac{(d+3)u}{2\pi}}-1)^2+2(1-\cos (u))e^{\frac{(d+3)u}{2\pi}})},
\end{align*}
where the last inequality is because $y/(e^{(d+3)y}-1)$ is decreasing in $y$, and the rest is increasing in $y$.  Thus by \cite[eq. (2.3)]{AJLO}, we have
\begin{equation} \label{bound4}
T_3 \geq \frac{8\pi}{(e^{(d+3)\pi}-1)\left( (e^\frac{d+3}{2} -1)^2 + 4e^\frac{d+3}{2} \right)}.
\end{equation}
We conclude by letting $c_2$ be the minimum of the bounds \eqref{bound1}, \eqref{bound2}, \eqref{bound3}, and \eqref{bound4}.
\end{proof}

We need an additional lemma, which is akin to Alfes et al. \cite[Lemma 2.4]{AJLO}, before we can proceed with the proof of Theorem \ref{Q2asymptotics}.

\begin{lemma}\label{lemma:2}
If $\arg(\tau)\leq \frac{\pi}{4}$ and $|x|\leq \frac{1}{2}$, then with $f(\tau)$ defined as in \eqref{def f(tau)}
\begin{equation*}
    f(\tau)=\frac{1}{2\sin(\frac{2\pi}{d+3})}\exp\left(\frac{\pi ^2}{3(d+3)}\tau^{-1}+f_2(\tau )\right),
\end{equation*} where  $|f_2(\tau)|< 0.224\sqrt{y}$.  

Furthermore, if we fix constants $0<\delta<\frac{2}{3}$, $0<\varepsilon_1<\frac{\delta}{2}$, $\beta=\frac{3}{2}-\frac{\delta}{4}$, and require $y^{\beta}\leq |x|\leq \frac{1}{2}$, then when a bound $y_{\text{max}}$ is chosen so that $0<y\leq y_{\text{max}}$ is sufficiently small, there is a constant $c_3$ depending on $d, \varepsilon_1,$ and $\delta$ such that
\begin{equation*}
    f(y+2\pi i x)\leq \exp\left(\frac{\pi ^2}{3(d+3)}y^{-1}-c_3y^{-\varepsilon_1}\right).
\end{equation*}
\end{lemma}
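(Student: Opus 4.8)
Throughout we work in the setting of Theorem~\ref{Q2asymptotics} ($d\ge 4$ even), with $q=e^{-\tau}$, $\tau=y+2\pi ix$, $y=Re(\tau)>0$, and we take the branch of $\log f(\tau)=\sum_{n\equiv\pm 2(d+3)}\sum_{k\ge 1}\tfrac1k q^{nk}$. The plan is to follow the Meinardus-style argument of Alfes et al.\ \cite{AJLO}. For the first assertion, substitute the Cahen--Mellin integral $e^{-w}=\tfrac1{2\pi i}\int_{(c)}\Gamma(s)w^{-s}\,ds$ ($Re(w)>0$, $c>0$) into each term $q^{nk}=e^{-nk\tau}$ and interchange sum and integral (valid for $c>1$) to get $\log f(\tau)=\tfrac1{2\pi i}\int_{(c)}\Gamma(s)\zeta(s+1)D(s)\tau^{-s}\,ds$ with $D(s)$ as in \eqref{Ddef}. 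I would then shift the contour to $Re(s)=-\tfrac12$; this is legitimate because $|\Gamma(\sigma+it)|$ decays like $e^{-\pi|t|/2}$ while $\zeta(s+1)$ and $D(s)$ grow only polynomially in $|t|$, and under the hypothesis $\arg(\tau)\le\tfrac\pi4$ one has $|\tau^{-s}|=|\tau|^{-\sigma}e^{t\arg(\tau)}\le|\tau|^{-\sigma}e^{|t|\pi/4}$, so the horizontal segments of the shifting rectangle vanish. Crossing the simple pole of $D$ at $s=1$ (residue $\tfrac2{d+3}$) contributes $\Gamma(1)\zeta(2)\tfrac2{d+3}\tau^{-1}=\tfrac{\pi^2}{3(d+3)}\tau^{-1}$, and crossing $s=0$ contributes the residue of a simple pole which, since $D(0)=0$ and $\Gamma(s)\zeta(s+1)=s^{-2}+O(1)$ near $s=0$, equals $D'(0)=\log\!\big(\tfrac1{2\sin(2\pi/(d+3))}\big)$ by \eqref{eq:D'(0)}. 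Exponentiating gives the claimed factorization with $f_2(\tau)=\tfrac1{2\pi i}\int_{(-1/2)}\Gamma(s)\zeta(s+1)D(s)\tau^{-s}\,ds$.

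To obtain the explicit bound, I would use $|\tau|^{1/2}=(y/\cos\arg(\tau))^{1/2}\le 2^{1/4}\sqrt y$ together with
\begin{equation*}
|f_2(\tau)|\le\frac{|\tau|^{1/2}}{2\pi}\int_{-\infty}^{\infty}\Big|\Gamma\big(-\tfrac12+it\big)\,\zeta\big(\tfrac12+it\big)\,D\big(-\tfrac12+it\big)\Big|\,e^{|t|\pi/4}\,dt,
\end{equation*}
bounding $|\Gamma(-\tfrac12+it)|$ by Stirling, $|\zeta(\tfrac12+it)|$ by an elementary estimate, and $|D(-\tfrac12+it)|=(d+3)^{1/2}\big|\zeta(-\tfrac12+it,\tfrac2{d+3})+\zeta(-\tfrac12+it,\tfrac{d+1}{d+3})\big|$ via the Hurwitz functional equation, which writes each Hurwitz zeta on $Re(s)=-\tfrac12$ in terms of $\Gamma(\tfrac32-it)$ and an absolutely convergent trigonometric series and so yields an $O(|t|)$ bound uniform in the shift parameter; using $d\le 61$ (so $(d+3)^{1/2}\le 8$) on the range under consideration then produces the constant, $|f_2(\tau)|<0.224\sqrt y$. \textbf{This explicit estimation is where I expect the main obstacle to lie}: the scheme is exactly that of \cite{AJLO}, but nailing down the numerical constant requires careful, uniform control of the Hurwitz zeta values on $Re(s)=-\tfrac12$ and of the convergence of the vertical integral.

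For the second assertion I would split on $\arg(\tau)$: for $y\le y_{\text{max}}$ sufficiently small the strip $y^\beta\le|x|\le\tfrac12$ is the union of $y^\beta\le|x|\le\tfrac y{2\pi}$ (where $\arg(\tau)\le\tfrac\pi4$) and $\tfrac y{2\pi}<|x|\le\tfrac12$ (where $\arg(\tau)>\tfrac\pi4$). On the first piece the factorization just proved gives $\log|f(\tau)|=D'(0)+\tfrac{\pi^2}{3(d+3)}\tfrac{y}{y^2+4\pi^2x^2}+Re\,f_2(\tau)$; since $\arg(\tau)\le\tfrac\pi4$ forces $4\pi^2x^2\le y^2$ while $|x|\ge y^\beta$, a short computation gives
\begin{equation*}
\frac{\pi^2}{3(d+3)}y^{-1}-\frac{\pi^2}{3(d+3)}\cdot\frac{y}{y^2+4\pi^2x^2}=\frac{\pi^2}{3(d+3)}\cdot\frac{4\pi^2x^2}{y(y^2+4\pi^2x^2)}\ge\frac{2\pi^4}{3(d+3)}\,y^{2\beta-3}=\frac{2\pi^4}{3(d+3)}\,y^{-\delta/2},
\end{equation*}
using $2\beta-3=-\tfrac\delta2$, and since $\varepsilon_1<\tfrac\delta2$ and $|f_2(\tau)|<0.224\sqrt y$ this outweighs the bounded terms $D'(0)+0.224\sqrt y$ for small $y$. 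On the second piece I would write $\log f(\tau)=g(\tau)+\sum_{k\ge 2}\tfrac1k\sum_{m\equiv\pm 2(d+3)}q^{mk}$, bound the $k\ge 2$ sum above by its value at $\tau=y$ (replacing each cosine by $1$), and apply Lemma~\ref{lemma Ti} to get $Re\,g(\tau)-g(y)\le -c_2y^{-1}$; combining with the $x=0$ case of the first assertion, $\log f(y)=\tfrac{\pi^2}{3(d+3)}y^{-1}+D'(0)+f_2(y)$, gives $\log|f(\tau)|\le\tfrac{\pi^2}{3(d+3)}y^{-1}-c_2y^{-1}+D'(0)+0.224\sqrt y$. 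In both pieces, taking $y_{\text{max}}$ small and any $c_3\le\min\!\big(\tfrac{c_2}2,\tfrac{\pi^4}{3(d+3)}\big)$ absorbs the bounded terms, and since $y^{-\varepsilon_1}\le y^{-\delta/2}\le y^{-1}$ for $y\le 1$, this yields $f(y+2\pi ix)\le\exp\!\big(\tfrac{\pi^2}{3(d+3)}y^{-1}-c_3y^{-\varepsilon_1}\big)$, as required.
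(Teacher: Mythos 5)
Your proposal is correct and follows essentially the same route as the paper: the Meinardus/Andrews Mellin-transform representation (the paper simply cites Andrews' (6.2.7) rather than re-deriving it) with residues at $s=1$ and $s=0$ for the first statement, and for the second the same split into $y^{\beta}\leq |x|\leq \frac{y}{2\pi}$, where the factorization yields the $-\frac{2\pi^4}{3(d+3)}y^{-\delta/2}$ gain, and $\frac{y}{2\pi}<|x|\leq \frac12$, where Lemma~\ref{lemma Ti} supplies $-c_2y^{-1}$. The only real divergence is precisely at the point you flag as the obstacle: the paper obtains $|f_2(\tau)|<0.224\sqrt{y}$ not by estimating the Hurwitz zetas on $Re(s)=-\frac12$ via the functional equation, but by observing $|D(s)|\leq |\zeta(s)|$ and quoting the explicit constant $\xi$ from the proof of Lemma~2.4 of Alfes et al.\ verbatim, which gives a bound uniform in $d$ and avoids the $(d+3)^{1/2}\leq 8$ restriction your version would impose.
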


\begin{proof}
Applying \cite[(6.2.7)]{andrews_book} with $A=\frac{2}{d+3}$, $\alpha=1$, $C_0=\frac{1}{2}$, and $D$ as in \eqref{Ddef}, we have by \eqref{eq:D'(0)} that
\begin{equation*}
    \log f(\tau )=\frac{\pi^2}{3(d+3)}\tau^{-1} + \log\bigg(\frac{1}{2\sin\left( \frac{2\pi}{d+3}\right)} \bigg) + \frac{1}{2\pi i}\int_{-\frac{1}{2}-i\infty}^{-\frac{1}{2}+i \infty}\tau ^{-s}\Gamma (s)\zeta (s+1)D(s)ds.
\end{equation*}
Since $|D(s)|\leq |\zeta(s)|$, we obtain directly from \cite[proof of Lemma 2.4]{AJLO} that
\begin{equation*}
\left| \frac{1}{2\pi i}\int_{-\frac{1}{2}-i\infty}^{\frac{1}{2}+i\infty}\tau ^{-s}\Gamma (s)\zeta(s+1)D(s)ds\right|\leq \xi \sqrt{y},
\end{equation*}
where $\xi$ is the constant
\begin{equation} \label{xi}
    \xi=\frac{\sqrt{2}}{2\pi}\int_{-\infty}^{\infty}\left|\zeta\left(\frac{1}{2}+it\right)\zeta\left(-\frac{1}{2}+it\right)\Gamma\left(-\frac{1}{2}+it\right)\right|dt < 0.224.
\end{equation} 
This proves the first statement of the Lemma \ref{lemma:2}. 

To prove the second statement, we consider Case 1 when $y^{\beta}\leq| x|\leq \frac{y}{2\pi}$, and Case 2 when  $\frac{y}{2\pi}< |x|\leq \frac{1}{2}$  separately.
 
In Case 1, we have that $|\text{arg}(\tau)|\leq \frac{\pi}{4},$ so applying the first statement of Lemma \ref{lemma:2} gives
\[
|f(y+2\pi i x)| \leq \left(\frac{1}{2\sin\frac{2\pi}{d+3}}\right)\exp\left(\frac{\pi ^2}{3(d+3)} \frac{y}{y^2 + 4\pi^2x^2} \right)\exp(\xi\sqrt{y}),
\]
and thus,
\begin{multline}\label{logbdd}
    \log|f(y+2\pi i x)| \leq 
    \log\left(\frac{1}{2\sin(\frac{2\pi}{d+3})}\right) + \left( \frac{\pi ^2}{3(d+3)}\cdot \frac{y}{y^2 + 4\pi^2x^2} \right) + \xi\sqrt{y}\\
    =\frac{\pi^2}{3(d+3)}y^{-1} + \frac{\pi^2}{3(d+3)}y^{-1}\left((1+4\pi^2x^2y^{-2})^{-1}-1\right) + \log\left(\frac{1}{2\sin(\frac{2\pi}{d+3})}\right) + \xi\sqrt{y}.
\end{multline}
We note that the final line gives a slightly different bound than in \cite{AJLO}, where the term $(1+4\pi^2x^2y^{-2})^{-\frac{1}{2}}$ appears instead of $(1+4\pi^2x^2y^{-2})^{-1}$. Thus we obtain a slightly better bound here.
Simplifying, we obtain 
\[
\left((1+4\pi^2x^2y^{-2})^{-1}-1\right) = -\frac{4\pi^2x^2y^{-2}}{1+4\pi^2x^2y^{-2}}.
\]
We find an upper bound on this negative term by finding a lower bound for its absolute value.  Observe that since in this case $y^{\beta}\leq |x|\leq \frac{y}{2\pi}$, we have $y^{2\beta-2}\leq x^2y^{-2}\leq  \frac{1}{4\pi^2}$. Hence,
\[
\left((1+4\pi^2x^2y^{-2})^{-1}-1\right) = -\frac{4\pi^2x^2y^{-2}}{1+4\pi^2x^2y^{-2}}  \leq  -2\pi^2 y^{2\beta -2}.
\] 
Thus by \eqref{logbdd}, 
\begin{multline*}
\log|f(y+2\pi i x)| \leq \frac{\pi^2}{3(d+3)}y^{-1} - \frac{2\pi^4}{3(d+3)} y^{2\beta -3} + \log\left(\frac{1}{2\sin(\frac{2\pi}{d+3})}\right) + \xi\sqrt{y} \\
= \frac{\pi^2}{3(d+3)}y^{-1} - y^{-\frac{\delta}{2}}\left( \frac{2\pi^4}{3(d+3)}  -  \log\left(2\sin\left(\frac{2\pi}{d+3}\right)\right)y^{\frac{\delta}{2}}-\xi y^{\frac{1+\delta}{2}} \right),
\end{multline*}
and we have that
\begin{equation*}
    \log |f(y+2\pi ix)|\leq \frac{\pi^2}{3(d+3)}y^{-1}  -  c_4y^{-\frac{\delta}{2}},
\end{equation*}
for the constant $c_4$ defined by
\begin{equation*}
    c_4=\frac{2\pi^4}{3(d+3)} - \log\left(2\sin\left(\frac{2\pi}{d+3}\right)\right) y^{\frac{\delta}{2}}_{\text{max}}-\xi y^{\frac{1+\delta}{2}}_{\text{max}}.
\end{equation*}
Observe that if we choose $y_{\text{max}}$ sufficiently small we can guarantee that $c_4>0$.

In Case 2, where $\frac{y}{2\pi}< |x|\leq \frac{1}{2}$, observe that $\arg(\tau)>\frac{\pi}{4}$ so,  we have as in \cite[(6.2.13)]{andrews_book} that 
\begin{equation}\label{eq log|f|}
    \log |f(y+2\pi i x)|=\log f(y)+Re(g(\tau))-g(y)\leq \frac{\pi^2}{3(d+3)}y^{-1}+Re(g(\tau))-g(y).
\end{equation}

Using Lemma \ref{lemma Ti}, \eqref{eq log|f|} can be bounded as follows:
\begin{equation*}
    \log|f(y+2\pi i x)|\leq \frac{\pi ^2}{3(d+3)}y^{-1}-c_5y^{-1}
\end{equation*}
where $c_5=c_2+y_{\text{max}}\log\left(2\sin\frac{2\pi}{d+3}\right)-\xi y^{\frac{3}{2}}_{\text{max}}$.  Again we observe that by choosing $y_{\text{max}}$ sufficiently small we can guarantee that $c_5>0$.

We require that $y_{\max}$ is small enough to ensure $c_4$ and $c_5$ are positive.  Defining 
\begin{equation*}
    c_3=\text{min}\left(c_4(y_{\text{max}})^{\varepsilon _1-\frac{\delta}{2}}, c_5(y_{\text{max}})^{\varepsilon _1-1}\right),
\end{equation*}
guarantees the inequalities $c_4y^{-\frac{\delta}{2}}\geq c_3y^{-\varepsilon_1}$ and $c_5y^{-1}\geq c_3y^{\varepsilon_1}$.

Thus, for all $x$ such that $y^{\beta}\leq |x|\leq \frac{1}{2}$, 
\begin{equation*}
    \log |f(y+2\pi i x)|\leq \frac{\pi^2}{3(d+3)}y^{-1}-c_3y^{-\varepsilon_1}.
\end{equation*}

\end{proof}

We are now ready to prove Theorem \ref{Q2asymptotics} following the method described in Alfes et al. \cite[Thm. 2.1]{AJLO} and Andrews \cite[Thm. 6.2]{andrews_book}.

\begin{proof}[Proof of Theorem \ref{Q2asymptotics}]
Recall $f(\tau)$ as defined in \eqref{def f(tau)}.  By the Cauchy integral theorem,
\begin{equation*}
    Q_{d}^{(2)}(n)=\frac{1}{2\pi i}\int_{\tau _0}^{\tau _0+2\pi i}f(\tau ) e^{n\tau}d\tau
    =\int_{-\frac{1}{2}}^{\frac{1}{2}}f(y+2\pi i x)e^{ny+2\pi i nx}dx.
\end{equation*} 
We will apply the saddle point method.  Set 
\begin{equation*}
    y=n^{-\frac{1}{\alpha +1}}(A\Gamma (\alpha +1)\zeta (\alpha +1))^{\frac{1}{\alpha +1}}=n^{-\frac{1}{2}}\frac{\pi}{\sqrt{3(d+3)}},
\end{equation*}
where here $\alpha=1$, $A=\frac{2}{d+3}$ and for notational simplicity we define $m=ny$. Fix $0<\delta<\frac{2}{3}$, $0<\varepsilon_1<\frac{\delta}{2}$, and $\beta=\frac{3}{2}-\frac{\delta}{4}$.  As in the proof of \cite[Thm. 2.1]{AJLO}, we assume $n\geq 6$ which guarantees that $y \leq \left(\frac{1}{2\pi}\right)^{\frac{1}{\beta-1}}$. This implies $y^{\beta}\leq \frac{y}{2\pi}$ and so $\frac{y}{2\pi}\leq \frac{1}{2}$. Thus both intervals in Cases 1 and 2 in the proof of the second statement of Lemma \ref{lemma:2} are nonempty, so we have 
\begin{equation} \label{Qd full integral}
    Q_{d}^{(2)}(n)=e^m\int_{-y^{\beta}}^{y^{\beta}}f(y+2\pi i x)\exp (2\pi i n x)dx +e^m R_1,
\end{equation}

\noindent where \begin{equation*}
    R_1=\left(\int_{-\frac{1}{2}}^{-y^{\beta}}+\int_{y^{\beta}}^{\frac{1}{2}}\right)f(y+2\pi i x)\exp(2\pi i n x)dx.
\end{equation*}
Then, since $y^{\beta}\leq |x| \leq \frac{1}{2}$ in the integrals defining $R_1$, we use Lemma \ref{lemma:2}
to obtain the bound
\begin{equation}\label{R1bound}
   |R_1|\leq \exp \left[\frac{\pi^2}{3(d+3)}\frac{n}{m}-c_3\left(\frac{n}{m}\right)^{\varepsilon_1} \right].
\end{equation}

\noindent Multiplying both sides by $e^m$ gives
\begin{equation}
    |e^mR_1|\leq\exp\left[2m-c_3m^{\varepsilon_1}\left(\frac{\pi ^2}{3(d+3)}\right)^{-\varepsilon_1}\right].
\end{equation}

Now, we turn our attention to the first integral of \eqref{Qd full integral}, 
\begin{equation}\label{Interior integral}
e^m\int_{-y^{\beta}}^{y^{\beta}}f(y+2\pi i x)\exp (2\pi i n x)dx.
\end{equation}

By Lemma \ref{lemma:2} and \cite[(6.2.21), (6.2.22)]{andrews_book}, we obtain
\begin{equation*}
    Q_{d}^{(2)}(n)=\exp\left(2m+\log\left(\frac{1}{2\sin\frac{2\pi}{d+3}}\right)\right)\int_{-(m/n)^{\beta}}^{(m/n)^{\beta}}\exp(\varphi _1(x))dx+\exp(m)R_1,
\end{equation*}
where
\begin{multline*}
    \varphi_1(x)= m\left[\left(1+\frac{2\pi i x n}{m}\right)^{-1}-1\right]+2\pi i n x -D(0)\log\left(\frac{m}{n}+2\pi i x\right)+g_1(x)\\
    =m\left[\left(1+\frac{2\pi i x n}{m}\right)^{-1}-1\right]+2\pi i nx +g_1(x),
\end{multline*}
and for a constant $\xi$,
\begin{equation}
    |g_1(x)|\leq \xi m^{-\frac12} \frac{\pi}{\sqrt{3(d+3)}}.
\end{equation}

Making the change of variables $2\pi x =(m/n)\omega$, it follows that
\begin{multline} \label{Qrevised}
    Q_{d}^{(2)}(n)=\exp\left(2m+\log\frac{m}{n} + \log\left(\frac{1}{2\sin\frac{2\pi}{d+3}}\right)-\log 2\pi\right)I + e^m R_1 \\
    =  \frac{m}{4\pi n \sin(\frac{2\pi}{d+3})} e^{2m}I+e^mR_1,
\end{multline}
where
\begin{align*}
    I & = \int_{-c_{10}m^{1-\beta}}^{c_{10}m^{1-\beta}}\exp(\varphi_2(\omega))d\omega, \\
    c_{10} & = 2\pi\left(\frac{\pi^2}{3(d+3)}\right)^{\beta-1}, \\
    \varphi_2(\omega) & = m\left(\frac{1}{1+i\omega}-1+i\omega\right)+g_1\left(\frac{m\omega}{2\pi n}\right),
\end{align*}
and $R_1$ is bounded as in \eqref{R1bound}.  

Now, rather than finding an asymptotic expression for \eqref{Interior integral}, we instead find an asymptotic expression for $I$.  Write
\begin{equation}\label{I=int+R2}
    I=\int_{-c_{10}m^{1-\beta}}^{c_{10}m^{1-\beta}}\exp(-m\omega^2)d\omega +R_2,
\end{equation}
where
\begin{equation*}
    R_2=\int_{-c_{10}m^{1-\beta}}^{c_{10}m^{1-\beta}}\exp(-m\omega^2)(\exp(\varphi_3(\omega))-1)d\omega,
\end{equation*}
with
\begin{equation*}
    \varphi_3(\omega)=m\left(\frac{1}{1+i\omega}-1+i\omega+\omega ^2\right)+g_1(\omega )
    =m\left(\frac{i\omega^3}{1+i\omega}\right)+g_1(\omega).
\end{equation*}
Thus, we can bound $\varphi_3(\omega)$ on the interval $[-c_{10}m^{1-\beta}, c_{10}m^{1-\beta}]$ by
\begin{equation}\label{phi3bound}
    |\varphi_3(\omega)|\leq c_{10}^3 m^{4-3\beta}+ \xi m^{-\frac12} \frac{\pi}{\sqrt{3(d+3)}}= c_{10}^3 m^{\frac{3\delta-2}{4}}+ \xi m^{-\frac12} \frac{\pi}{\sqrt{3(d+3)}}.
\end{equation}
Since $\frac{3\delta-2}{4}$ is negative, minimizing $m$ will yield an upper bound.  Thus 
\begin{equation} \label{minm}
m=ny=\frac{\pi\sqrt{n}}{\sqrt{3(d+3)}}\geq \frac{\sqrt{2}\pi}{3\sqrt{d+3}}   
\end{equation}
implies
\begin{equation*}
    |\varphi_3(\omega)|\leq\frac{2^{\frac{22+3\delta}{8}}\pi^{\frac{22-3\delta}{4}}}{3(d+3)^{\frac{10-3\delta}{8}}}+\xi \left( \frac{\pi^2}{2(d+3)}\right)^{\frac{1}{4}} =:\varphi_{3,\text{max}}.
\end{equation*}
Define the constant $c_6$ by $c_6:=\frac{\exp(\varphi_{3,\text{max}})-1}{\varphi_{3,\text{max}}}$.  Then using \eqref{phi3bound} and \eqref{minm}, 
\begin{multline}\label{integrandbound}
|\exp(\varphi_{3}(\omega)-1| = |\varphi_3(\omega)|c_6 \leq\left( c_{10}^3m^{\frac{3\delta-2}{4}}+\xi\sqrt{\frac{\pi^2}{3m(d+3)}}\right)c_6\\
=m^{\frac{3\delta-2}{4}}\left(c_6c_{10}^3+\xi c_6m^{-\frac{3\delta}{4}}\sqrt{\frac{\pi^2}{3(d+3)}}\right) 
\leq m^{\frac{3\delta-2}{4}}\left( c_6c_{10}^3+\xi c_6 \frac{\pi^{\frac{4-3\delta}{4}}}{2^{\frac{3\delta}{8}}3^{\frac{2-3\delta}{4}}(d+3)^{\frac{4-3\delta}{8}}}\right) :=m^{\frac{3\delta-2}{4}}c_7.
\end{multline}
Using \eqref{integrandbound} we thus obtain
\begin{equation}\label{R2bound}
    |R_2|\leq\int_{-c_{10}m^{1-\beta}}^{c_{10}m^{1-\beta}}\exp(-m\omega^2)m^{\frac{3\delta-2}{4}}c_7d\omega\leq 2c_{10}c_7m^{\delta-1}.
\end{equation}

Returning to $I$, we now have by \eqref{I=int+R2} and the change of variable  $z=m^{\frac12}\omega$, that
\[
I = \frac{1}{\sqrt{m}}\int_{-c_{10}m^{\frac{\delta}{4}}}^{c_{10}m^{\frac{\delta}{4}}}\exp(-z^2)dz + R_2 =\frac{1}{\sqrt{m}}\int_{-\infty}^{\infty}\exp(-z^2)dz-\frac{2}{\sqrt{m}}\int_{c_{10}m^{\frac{\delta}{4}}}^{\infty}\exp (-z^2)dz + R_2,
\]
with $R_2$ bounded as in \eqref{R2bound}.  Defining 
 \begin{equation}\label{negative g_2}
     g_2(z):=-\frac{2}{\sqrt{m}}\int_{c_{10}m^{\frac{\delta}{4}}}^{\infty}\exp (-z^2)dz,
 \end{equation}
we write $I$ as 
\begin{equation}\label{Irevised}
I=\left(\frac{\pi}{m}\right)^{\frac{1}{2}}+g_2(m)+R_2,
\end{equation}
where $g_2(m)$ is negative and $|g_2(m)|\leq\frac{2}{\sqrt{m}}\exp\left(-c_{10}m^{\frac{\delta}{4}}\right)$.
Thus from \eqref{Qrevised} and \eqref{Irevised} we now have that
\begin{multline*}
Q_{d}^{(2)}(n) = \frac{m}{4\pi n \sin(\frac{2\pi}{d+3})} e^{2m}\left( \left(\frac{\pi}{m}\right)^{\frac{1}{2}}+g_2(m)+R_2 \right)+e^mR_1 = \frac{e^{2m}\sqrt{m}}{4n\sqrt{\pi}\sin(\frac{2\pi}{d+3})} + R_d(n),
\end{multline*}
where 
\[
R_d(n) = \frac{m e^{2m}}{4\pi n \sin(\frac{2\pi}{d+3})} \left(g_2(m) + R_2 \right) + e^mR_1.
\]
Writing this in terms of the variable $n$ and using \eqref{R1bound} and \eqref{R2bound}, we obtain our desired result.  Namely,
\[
Q_{d}^{(2)}(n)= \frac{1}{4(3(d+3))^{\frac{1}{4}}\sin\left(\frac{2\pi}{d+3}\right)}n^{-\frac{3}{4}}\exp\left(\frac{2\pi \sqrt{n}}{\sqrt{3(d+3)}}\right) + R_d(n),
\]
where
\begin{multline}\label{|R_d(n)|}
    |R_d(n)|\leq n^{-\frac{1}{4}}\left(\frac{\pi^{\frac{1}{2}}(3(d+3))^{-\frac{3}{4}}}{2\sin(\frac{2\pi}{d+3})}\right)\exp\left(\frac{2\pi\sqrt{n}}{\sqrt{3(d+3)}}-n^{-\frac{\delta}{8}}2\pi^{2-\frac{\delta}{4}}(3(d+3))^{-2+\frac{3\delta}{8}}\right)\\
    + n^{-1+\frac{\delta}{2}}\left(\frac{c_7\pi^{1+\frac{\delta}{2}}}{(3(d+3))^2\sin\left(\frac{2\pi}{d+3}\right)}\right)\exp\left(\frac{2\pi\sqrt{n}}{\sqrt{3(d+3)}}\right)\\
    +\exp\left(\frac{2\pi\sqrt{n}}{\sqrt{3(d+3)}}-c_3n^{\frac{\varepsilon_1}{2}}\left(\frac{\pi^2}{3(d+3)}\right)^{-\frac{3\varepsilon_1}{2}}\right).
\end{multline}
\end{proof}

\subsection{An upper bound on $Q_{d}^{(2)}(n)$}

From the proof of Theorem \ref{Q2asymptotics}, since $g_2(m)$ is negative, we observe that 
\[
0\leq Q_{d}^{(2)}(n) \leq \frac{1}{4(3(d+3))^{\frac{1}{4}}\sin\left(\frac{2\pi}{d+3}\right)}n^{-\frac{3}{4}}\exp\left(\frac{2\pi \sqrt{n}}{\sqrt{3(d+3)}}\right) + \frac{m e^{2m}}{4\pi n \sin(\frac{2\pi}{d+3})} R_2 + e^mR_1.
\]
Moreover, $c_3>0$, so \eqref{R1bound} and \eqref{R2bound} give that
\begin{multline}\label{eq:Q2bound}
|Q_{d}^{(2)}(n)| \leq \frac{1}{4(3(d+3))^{\frac{1}{4}}\sin\left(\frac{2\pi}{d+3}\right)}n^{-\frac{3}{4}}\exp\left(\frac{2\pi \sqrt{n}}{\sqrt{3(d+3)}}\right) \\ + n^{-1+\frac{\delta}{2}}\left(\frac{c_7\pi^{1+\frac{\delta}{2}}}{(3(d+3))^2\sin\left(\frac{2\pi}{d+3}\right)}\right)\exp\left(\frac{2\pi\sqrt{n}}{\sqrt{3(d+3)}}\right)
    +\exp\left(\frac{2\pi\sqrt{n}}{\sqrt{3(d+3)}}\right). 
\end{multline}
Similarly, from the proof of \cite[Thm. 2.1]{AJLO}, 
\begin{multline}\label{eq:Q1bound}
|Q_{d}^{(1)}(n)| \leq \frac{1}{4(3(d+3))^{\frac{1}{4}}\sin\left(\frac{\pi}{d+3}\right)}n^{-\frac{3}{4}}\exp\left(\frac{2\pi \sqrt{n}}{\sqrt{3(d+3)}}\right) \\
+ n^{-1+\frac{\delta}{2}}\left(\frac{c_7\pi^{1+\frac{\delta}{2}}}{(3(d+3))^2\sin(\frac{\pi}{d+3})}\right)\exp\left(\frac{2\pi\sqrt{n}}{\sqrt{3(d+3)}}\right) +\exp\left(\frac{2\pi\sqrt{n}}{\sqrt{3(d+3)}}\right),
\end{multline}
where in \eqref{eq:Q1bound} the positive real constants $\delta$, $c_7$, $c_3$, and $\varepsilon_1$ are defined separately, but analogously, as in \cite{AJLO}.

Due to the parallel nature of these bounds, we can combine them into one expression.  Namely, for $b\in \{1,2\}$ and $d\geq 4$ we have
\begin{multline}\label{combinedQ}
|Q_d^{(b)}(n)| \leq \frac{1}{4(3(d+3))^{\frac{1}{4}}\sin\left(\frac{b\pi}{d+3}\right)}n^{-\frac{3}{4}}\exp\left(\frac{2\pi\sqrt{n}}{\sqrt{3(d+3)}}\right) \\
+ n^{-1+\frac{\delta}{2}}\left(\frac{c_7\pi^{1+\frac{\delta}{2}}}{(3(d+3))^2\sin\left(\frac{b\pi}{d+3}\right)}\right)\exp\left(\frac{2\pi\sqrt{n}}{\sqrt{3(d+3)}}\right) +\exp\left(\frac{2\pi\sqrt{n}}{\sqrt{3(d+3)}}\right),
\end{multline}
where again the positive real constants $\delta$, $c_7$, $c_3$, and $\varepsilon_1$ are defined separately, but analogously, depending on $b$.  They are defined in Section \ref{asymptotics} when $b=2$ and as in \cite{AJLO} when $b=1$. 
 We will use \eqref{combinedQ} in Section \ref{bounds}.

\section{Obtaining explicit bounds and Proof of Kang-Park}\label{bounds}

Recall that our goal is to determine positive integers $N(d)$ for each $4\leq d \leq 61$ such that when $d$ is even we have $q_d^{(2)}(n) \geq Q_d^{(2)}(n)$, and when $d$ is odd, we have $q_d^{(2)}(n) \geq Q_d^{(1)}(n)$ for all $n\geq N(d)$.   From Theorem \ref{q2asymptotics}, the main term of $q_d^{(2)}(n)$ is
\begin{equation*}
m_d(n) := \frac{A_d^{1/4}}{2\sqrt{\pi\alpha^{d-3}(d\alpha^{d-1}+1)}}n^{-\frac{3}{4}}\exp\left(2\sqrt{A_d n}\right).
\end{equation*}
Thus we need to compare $m_d(n)$ with the sum of the bound for $Q_d^{(b)}(n)$ given in \eqref{combinedQ} and the bounds for $r(n)$ given in \eqref{E1'(n)}, \eqref{E2(n)}, \eqref{E3(n)}, and \eqref{I_2}.
 Namely, we need to determine $N(d)$ such that for all $n\geq N(d)$,
\[
|Q_d^{(b)}(n)| + |r_d(n)| \leq m_d(n).
\]

Since the bounds for $|Q_d^{(b)}(n)|$ and $|r_d(n)|$ are sums, we approach this by writing $$|Q_d^{(b)}(n)| + |r_d(n)| = \sum_{i=1}^8 S_i,$$ and finding $N_i$ depending on a weight $K_i$ such that for $n\geq N_i$ we have $S_i\leq K_i m_d(n)$ in each case.  Then choosing $K_i$ so that $\sum_{i=1}^8 K_i=1$ and setting $N(d) = \max\{N_i\}$ gives that for all $n\geq N(d)$,
\[
|Q_d^{(b)}(n)| + |r_d(n)| = \sum_{i=1}^8 S_i \leq \sum_{i=1}^8 K_im_d(n) = m_d(n),
\]
which ensures that $q_d^{(2)}(n)\geq Q_d^{(b)}(n)$ for all $n\geq N(d)$.  We accomplish this in the following two lemmas, the first addressing $|Q_d^{(b)}(n)|$ and the second $|r_d(n)|$. 

\begin{lemma}\label{Qdbounds}
   Let $4\leq d\leq 61$, and $b\in\{1,2\}$. Let $\alpha$ be the unique real solution of $x^d+x-1=0$ in the interval $(0,1)$, and let $A_d:=\frac{d}{2}\log ^2\alpha+\sum_{r=1}^{\infty}\frac{\alpha^{rd}}{r^2}$.  Fix weights $K_1, K_2, K_3\in(0,1)$. 
 Then there exists an explicit positive integer $N_Q$ depending on $K_i$ (defined in \eqref{N_Qdef}) such that for all $n\geq N_Q$, 
   \begin{equation*}
       Q_d^{(b)}(n)\leq (K_1+K_2+K_3)\frac{A_d^{1/4}}{2\sqrt{\pi\alpha^{d-3}(d\alpha^{d-1}+1)}}n^{-\frac{3}{4}}\exp\left(2\sqrt{A_dn}\right).
   \end{equation*}
\end{lemma}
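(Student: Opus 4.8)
The plan is to divide each of the three summands on the right-hand side of \eqref{combinedQ} by the main term
\[
m_d(n) := \frac{A_d^{1/4}}{2\sqrt{\pi\alpha^{d-3}(d\alpha^{d-1}+1)}}\,n^{-\frac34}\exp\!\left(2\sqrt{A_d n}\right)
\]
of $q_d^{(2)}(n)$ from Theorem \ref{q2asymptotics}. Since $m_d(n)$ and each summand of \eqref{combinedQ} has the shape $(\text{const})\cdot n^{p}\exp(c\sqrt n)$, each of the three resulting ratios takes the form $B_j\,n^{p_j}\exp(-\gamma_d\sqrt n)$ for an explicit $B_j>0$, an exponent $p_j\ge 0$, and the common rate
\[
\gamma_d := 2\sqrt{A_d}-\frac{2\pi}{\sqrt{3(d+3)}}.
\]
Concretely $B_1 = \dfrac{\sqrt{\pi\alpha^{d-3}(d\alpha^{d-1}+1)}}{2(3(d+3))^{1/4}\sin(b\pi/(d+3))\,A_d^{1/4}}$ with $p_1=0$; $B_2,B_3$ are formed the same way, with $p_2 = \max\!\bigl(0,\tfrac{\delta}{2}-\tfrac14\bigr)$ (using $\delta<\tfrac23$ and $n^{-1+\delta/2}\le n^{-3/4}n^{p_2}$) and $p_3=\tfrac34$. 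The value $b\in\{1,2\}$ enters only through $\sin(b\pi/(d+3))$, which is positive since $d+3\ge 7$, so all the $B_j$ are finite and positive.

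The one substantive ingredient is that $\gamma_d>0$ for every $d$ with $4\le d\le 61$, i.e. that the main term of $q_d^{(2)}(n)$ dominates that of $Q_d^{(b)}(n)$ exponentially; equivalently, $A_d>\pi^2/(3(d+3))$. As $d$ runs over a finite set and $\alpha$ (hence $A_d$) is computable to arbitrary precision for each such $d$, I would establish this by direct numerical verification for each $d$ in range; for $b=1$ the same inequality already appears in \cite{AJLO}.

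Granting $\gamma_d>0$, I would make the decay effective via the elementary bound $t^{2p}\exp(-\tfrac{\gamma}{2}t)\le \bigl(\tfrac{4p}{\gamma e}\bigr)^{2p}$, valid for all $t>0$, $p\ge 0$, $\gamma>0$ (the left side is maximized at $t=4p/\gamma$, and the right side is read as $1$ when $p=0$). Taking $t=\sqrt n$, $\gamma=\gamma_d$, and writing $T_j(n)$ for the $j$-th summand of \eqref{combinedQ}, this gives
\[
\frac{T_j(n)}{m_d(n)}\ \le\ B_j\Bigl(\frac{4p_j}{\gamma_d e}\Bigr)^{2p_j}\exp\!\Bigl(-\tfrac{\gamma_d}{2}\sqrt n\Bigr),
\]
so that $T_j(n)\le K_j m_d(n)$ whenever $n\ge N_j$, where
\[
N_j := \max\!\left(1,\ \left\lceil\left(\frac{2}{\gamma_d}\log\frac{B_j\bigl(4p_j/(\gamma_d e)\bigr)^{2p_j}}{K_j}\right)^{2}\right\rceil\right)
\]
(and $N_j=1$ when the argument of the logarithm is $\le 1$).

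Finally I would set $N_Q := \max(N_1,N_2,N_3)$ — this is the quantity \eqref{N_Qdef} — and conclude that for $n\ge N_Q$, by \eqref{combinedQ},
\[
Q_d^{(b)}(n)\ \le\ |Q_d^{(b)}(n)|\ \le\ T_1(n)+T_2(n)+T_3(n)\ \le\ (K_1+K_2+K_3)\,m_d(n),
\]
which is the assertion. Apart from the finite check that $\gamma_d>0$, the work is purely bookkeeping of the explicit constants $B_j$ (and of $\delta,c_7$ inherited from the proof of Theorem \ref{Q2asymptotics}, or from \cite{AJLO} when $b=1$), so I expect the only real obstacle to be keeping those constants straight and—if one cares about feasibility for the computations in Section \ref{computations}—choosing the splitting of $\gamma_d$ so that $N_Q$ does not blow up.
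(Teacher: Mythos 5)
Your proposal follows the same skeleton as the paper's proof: bound $Q_d^{(b)}(n)$ by the three summands of \eqref{combinedQ}, compare each summand $S_j$ to $K_j m_d(n)$ to extract an explicit threshold $N_j$, and set $N_Q=\max\{N_1,N_2,N_3\}$; both arguments hinge on the positivity of $\gamma_d=2\sqrt{A_d}-2\pi/\sqrt{3(d+3)}$, which you rightly flag as the one substantive fact to verify (the paper uses it implicitly). The genuine difference is in how the polynomial prefactors are absorbed. The paper cancels the $n$-powers exactly for $S_1$ and $S_2$ (using $n^{-1+\delta/2}\le n^{-3/4}$, valid since their chosen $\delta<1/2$), so $N_1,N_2$ come out as clean closed forms with the full decay rate $\gamma_d$, while for $S_3$ (where a residual $n^{3/4}$ survives) they solve the transcendental crossover equation numerically in Sagemath and take the larger root $\sigma_1$. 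You instead apply the uniform bound $t^{2p}e^{-\gamma t/2}\le(4p/(\gamma e))^{2p}$ to all three terms, which yields closed-form $N_j$ even for $j=3$ and avoids numerical root-finding entirely, at the cost of sacrificing half the exponential decay rate and hence inflating each $N_j$ by roughly a factor of $4$ (you could recover the paper's $N_1,N_2$ by not splitting $\gamma_d$ when $p_j=0$). Since the lemma only asserts an explicit $N_Q$, your version is a valid proof; the looser constants would matter only for the downstream computations in Section \ref{computations}, a trade-off you correctly identify.
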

\begin{proof}
For $i\in \{1,2,3\}$, let $S_i$ denote the $i$th summand appearing in the right hand side of \eqref{combinedQ}, so that $Q_d^{(b)}(n)\leq \sum_{i=1}^3 S_i$.  The inequality $S_1 \leq K_1m_d(n)$ is equivalent to the following

    \begin{equation*}
        \log\left(\frac{\sqrt{\pi\alpha^{d-3}(d\alpha^{d+1}+1)}}{2K_1\sin\frac{b\pi}{d+3}(3(d+3)A_d)^{\frac{1}{4}}}\right)\leq \sqrt{n}\left(2\sqrt{A_d}-\frac{2\pi}{\sqrt{3(d+3)}}\right), 
    \end{equation*}
    or more directly,
    \begin{equation*}
        n\geq\left(\log\left(\frac{\sqrt{\pi\alpha^{d-3}(d\alpha^{d+1}+1)}}{2K_1\sin\frac{b\pi}{d+3}(3(d+3)A_d)^{\frac{1}{4}}}\right)\left(2\sqrt{A_d}-\frac{2\pi}{\sqrt{3(d+3)}}\right)^{-1}\right)^2.
    \end{equation*}
    Thus, defining
    \begin{equation}\label{N1}
        N_1:=\Bigg\lceil \left(\log\left(\frac{\sqrt{\pi\alpha^{d-3}(d\alpha^{d-1}+1)}}{2K_1\sin\frac{b\pi}{d+3}(3(d+3)A_d)^{\frac14}}\right)\right)^2\left(2\sqrt{A_d}-\frac{2\pi}{\sqrt{3(d+3)}}\right)^{-2}\Bigg\rceil,
    \end{equation}
ensures that $S_1 \leq K_1m_d(n)$ for all $n\geq N_1$.

Since $0<\delta<\frac{1}{2}$, to obtain $S_2 \leq K_2m_d(n)$ it suffices to show
\[
n^{-\frac{3}{4}}\left(\frac{c_7\pi^{1+\delta/2}}{(3(d+3))^2\sin\frac{b\pi}{d+3}}\right)\exp\left(\frac{2\pi\sqrt{n}}{\sqrt{3(d+3)}}\right)\leq\frac{K_2A_d^{1/4}}{2\sqrt{\pi\alpha^{d-3}(d\alpha^{d-1}+1)}}n^{-\frac{3}{4}}e^{2\sqrt{A_dn}},
\]
which is equivalent to 
\[
   \log \left(\left(\frac{c_7\pi^{1+\delta/2}}{(3(d+3))^2\sin\frac{b\pi}{d+3}}\right)\left(\frac{K_2A_d^{1/4}}{2\sqrt{\pi\alpha^{d-3}(d\alpha^{d-1}+1)}}n^{-\frac{3}{4}}\right)^{-1}\right)\leq \sqrt{n}\left(2\sqrt{A_d}-\frac{2\pi}{\sqrt{3(d+3)}}\right), 
\]
or more directly,
\[
    n\geq \left(\log\left(\frac{2c_7\pi^{1+\delta/2}\sqrt{\pi\alpha^{d-3}(d\alpha^{d-1}+1)}}{K_2A_d^{1/4}(3(d+3))^2\sin\frac{b\pi}{d+3}}\right)\right)^2\left(2\sqrt{A_d}-\frac{2\pi}{\sqrt{3(d+3)}}\right)^{-2}.
\]
    
Thus, defining
\begin{equation}\label{N2}
N_2:=  \Bigg\lceil\left(\log\left(\frac{2c_7\pi^{1+\delta/2}\sqrt{\pi\alpha^{d-3}(d\alpha^{d-1}+1)}}{K_2A_d^{1/4}(3(d+3))^2\sin\frac{b\pi}{d+3}}\right)\right)^2\left(2\sqrt{A_d}-\frac{2\pi}{\sqrt{3(d+3)}}\right)^{-2}\Bigg\rceil
\end{equation}
ensures that $S_2 \leq K_2m_d(n)$ for all $n\geq N_2$.  

Lastly, $S_3 \leq K_3m_d(n)$ is equivalent to 
\begin{equation}\label{N3 inequality 2}
\frac{2\sqrt{\pi\alpha^{d-3}(d\alpha^{d-1}+1)}}{K_3A_d^{1/4}}n^{\frac{3}{4}}\leq \exp\left(\left(2\sqrt{A_d}-\frac{2\pi}{\sqrt{3(d+3)}}\right)n^{\frac{1}{2}}\right).
\end{equation}
The equation
\[
\frac{2\sqrt{\pi\alpha^{d-3}(d\alpha^{d-1}+1)}}{K_3A_d^{1/4}}n^{\frac{3}{4}}= \exp\left(\left(2\sqrt{A_d}-\frac{2\pi}{\sqrt{3(d+3)}}\right)n^{\frac{1}{2}}\right)
\]
has two positive solutions $\sigma_0 < \sigma_1$ for $n$ and \eqref{N3 inequality 2} is satisfied for all $n\geq \sigma_1$. We thus define $N_3 := \lceil \sigma_1 \rceil$, which we calculate using a root finding program in Sagemath for each $4\leq d \leq 61$.  Thus we have $S_3 \leq K_3m_d(n)$ for all $n\geq N_3$.

Setting 
\begin{equation}\label{N_Qdef}
N_Q := \max\{N_1,N_2,N_3\}
\end{equation}
gives the desired result.
\end{proof}

\begin{lemma}\label{qdbounds} 
Let $4\leq d\leq 61$ and let $r_d^{(2)}(n)$ be as defined in \cite[Theorem 6.5]{DKT_proc} with $a=2.$    Let $\alpha$ be the unique real solution of $x^d+x-1=0$ in the interval $(0,1)$, and let $A_d:=\frac{d}{2}\log ^2\alpha+\sum_{r=1}^{\infty}\frac{\alpha^{rd}}{r^2}$. Fix weights $K_4, K_5, K_6,K_7,K_8\in (0,1).$  Then, there exists an explicit positive integer $N_q$ depending on $K_i$ (defined in \eqref{N_qdef}) such that for all $n\geq N_q$,

    \begin{equation*}
        r_d^{(2)}(n)\leq (K_4+K_5+K_6+K_7+K_8)\frac{A_d^{1/4}}{2\sqrt{\pi\alpha^{d-3}(d\alpha^{d-1}+1)}}n^{-\frac{3}{4}}\exp\left(2\sqrt{A_dn}\right).
    \end{equation*}
\end{lemma}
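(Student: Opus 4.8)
The plan is to follow the proof of Lemma~\ref{Qdbounds} essentially verbatim, now with the main term
\[
m_d(n)=\frac{A_d^{1/4}}{2\sqrt{\pi\alpha^{d-3}(d\alpha^{d-1}+1)}}\,n^{-\frac{3}{4}}\exp\!\left(2\sqrt{A_dn}\right)
\]
of $q_d^{(2)}(n)$ from Theorem~\ref{q2asymptotics} serving as the reference quantity and $r_d^{(2)}(n)$ in place of $Q_d^{(b)}(n)$. Following the generalization of \cite[Thm.~3.1]{AJLO} recorded in \cite[Thm.~6.5]{DKT_proc}, the error term $r_d^{(2)}(n)$ (the case $a=2$) is dominated by a sum of pieces coming from the circle-method decomposition, namely the major-arc Taylor-expansion error, the tail of the Gaussian integral, and the minor-arc contribution, which together I would make explicit as five summands recorded in the bounds \eqref{E1'(n)}, \eqref{E2(n)}, \eqref{E3(n)}, and \eqref{I_2}. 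Writing $S_4,S_5,S_6,S_7,S_8$ for these five summands, it suffices to produce, for each $i\in\{4,5,6,7,8\}$ and each fixed $K_i\in(0,1)$, an explicit positive integer $N_i$ with $S_i\le K_i m_d(n)$ for all $n\ge N_i$; then summing gives $r_d^{(2)}(n)\le\sum_{i=4}^{8}S_i\le(K_4+\cdots+K_8)\,m_d(n)$, and taking $N_q:=\max\{N_4,\dots,N_8\}$ (which I record as \eqref{N_qdef}) proves the lemma.

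Each comparison $S_i\le K_i m_d(n)$ is carried out by the same two elementary techniques used for $S_1,S_2,S_3$ in Lemma~\ref{Qdbounds}. Every summand has the form (a power of $n$, possibly multiplied by a decaying factor $\exp(-cn^{e})$ with $0<e<\frac{1}{2}$) times $\exp(2\sqrt{A_dn})$, the attached exponential rate being either $2\sqrt{A_d}$ itself or strictly smaller; all of this data can be read off from \eqref{E1'(n)}, \eqref{E2(n)}, \eqref{E3(n)}, and \eqref{I_2}. Dividing both sides by $m_d(n)$ and simplifying, where I first bound the power of $n$ attached to $S_i$ above by $n^{-3/4}$ whenever it is at most that (as in the $S_2$ case, so that the $n^{-3/4}$ of $m_d(n)$ cancels), the inequality reduces either to a closed-form comparison, a pure power-of-$n$ inequality $n^{a}\le C$ or, after squaring, a linear-in-$\sqrt n$ inequality $\log C\le c'\sqrt n$ with $c'>0$, from which $N_i$ follows by inspection exactly as for $N_1$ and $N_2$; or to a transcendental crossover $C n^{\gamma}\le\exp(c'\sqrt n)$ with $\gamma,c'>0$ (or $C n^{\gamma}\exp(-cn^{e})\le C'$ with $\gamma,c,e>0$), whose crossover equation has a largest positive root $\sigma_1$ so that the inequality holds for all $n\ge\sigma_1$, and I set $N_i=\lceil\sigma_1\rceil$, computed with a root-finding routine in Sagemath for each $4\le d\le 61$ exactly as for $N_3$. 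Which alternative occurs is governed only by whether $S_i$'s exponential rate is strictly below $2\sqrt{A_d}$ or equal to it, and by the sign of the power of $n$ left over after the cancellations.

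I expect the main obstacle to be bookkeeping rather than anything conceptual. One must extract from \cite[Thm.~6.5]{DKT_proc} with $a=2$ the precise constants in each of the five error bounds, which are assembled from the auxiliary parameters $\delta$ and $\varepsilon_1$ and from constants depending on $\alpha$ and $A_d$ in the manner of \cite[proof of Thm.~3.1]{AJLO}, decide case by case which of the two alternatives above applies uniformly over $4\le d\le 61$, and verify that every argument of a logarithm is manifestly positive so that the resulting $N_i$ are well defined. A secondary point is to fix the auxiliary parameters ($\delta$, $\varepsilon_1$, and any smallness threshold analogous to the $y_{\max}$ of Section~\ref{asymptotics}) compatibly with the hypotheses under which the asymptotic expansion and its error decomposition in Theorem~\ref{q2asymptotics} are valid; once this is arranged, setting $N_q=\max\{N_4,\dots,N_8\}$ as in \eqref{N_qdef} completes the proof.
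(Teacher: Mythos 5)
Your proposal follows essentially the same route as the paper: decompose $r_d^{(2)}(n)$ into the five summands $S_4,\dots,S_8$ bounded in \eqref{E1'(n)}, \eqref{E2(n)}, \eqref{E3(n)}, and \eqref{I_2}, compare each to $K_i m_d(n)$ either via a closed-form logarithmic inequality (as for $N_4,\dots,N_7$) or via a numerically solved crossover equation (as for $N_8$, which also uses the constant bounds $F_1,F_2$ from Lemma \ref{q2preliminaries}), and set $N_q=\max\{N_4,\dots,N_8\}$. The only slight inaccuracy is that the auxiliary parameters governing these bounds are $\varepsilon$, $\varepsilon_2$, $\xi$, and $\rho$ from the Meinardus-type expansion of $q_d^{(2)}(n)$ rather than the $\delta,\varepsilon_1$ of Section \ref{asymptotics}, but you flag exactly this bookkeeping as the point to verify.
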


\begin{proof}   In order to satisfy the hypotheses of Lemma \ref{q2preliminaries}, choose $\rho=\alpha^d=1-\alpha,\ 0<\xi<1,\ 0<\varepsilon<\frac{1}{2}$, as well as $x=\sqrt{\frac{A_d}{n}}$ and $\gamma:=\frac{1}{2\pi\sqrt{\alpha^{d-3}(d\alpha^{d-1}+1)}}$.  Then, as described in \cite{DKT_proc}, it follows that $r_d^{(2)}(n)=E_1'+E_2+E_3+I_2$, where the summands have explicitly given bounds.  We state these in \eqref{E1'(n)}, \eqref{E2(n)}, \eqref{E3(n)}, and \eqref{I_2}, respectively.
To begin, 
    \begin{equation}\label{E1'(n)}
       | E_1'|\leq S_4:= \frac{\gamma}{\sqrt{2A_d^{\varepsilon}}}n^{\frac{\varepsilon}{2}-1}e^{2\sqrt{2A_dn}-n^{\frac{1}{2}-\varepsilon}A_d^{\frac{1}{2}+\varepsilon}}.
    \end{equation} The inequality $S_4\leq K_4 m_q(d)$ is equivalent to
    \begin{equation}
        \frac{\gamma}{\sqrt{2A_d^{\varepsilon}}}n^{\frac{\varepsilon}{2}-1}e^{2\sqrt{2A_dn}-n^{\frac{1}{2}-\varepsilon}A_d^{\frac{1}{2}+\varepsilon}}\leq K_4\frac{A_d^{1/4}}{2\sqrt{\pi\alpha^{d-3}(d\alpha^{d-1}+1)}}n^{-\frac{3}{4}}\exp\left(2\sqrt{A_dn}\right).
    \end{equation}
    Since $0<\varepsilon<\frac{1}{2}$, it suffices to determine $N_4\in \mathbb{N}$ to ensure that for $n\geq N_4$,
 \begin{equation*}
     \frac{\gamma}{\sqrt{2A_d^{\varepsilon}}}n^{-\frac{3}{4}}e^{2\sqrt{2A_dn}-n^{\frac{1}{2}-\varepsilon}A_d^{\frac{1}{2}+\varepsilon}}\leq K_4\frac{A_d^{1/4}}{2\sqrt{\pi\alpha^{d-3}(d\alpha^{d-1}+1)}}n^{-\frac{3}{4}}\exp\left(2\sqrt{A_dn}\right).
 \end{equation*}
 Equivalently, we have 
 \begin{equation*}
     \log\left(\frac{1}{K_4\sqrt{2\pi}A^{\varepsilon/2+1/4}}\right)\leq n^{\frac{1}{2}-\varepsilon}A_d^{\frac{1}{2}+\varepsilon},
 \end{equation*}
 which implies 
 \begin{equation}\label{N4}
     N_4:=\bigg\lceil\left(A_d^{-\frac{1}{2}+\varepsilon}\log\left(\frac{1}{K_4\sqrt{2\pi}A_d^{\varepsilon/2+1/4}}\right)\right)^{\frac{2}{1-2\varepsilon}}\bigg\rceil.
 \end{equation}
 
    
Let $\varepsilon_2>\frac{1}{3}$ and $\varepsilon_2>\varepsilon$.  Then as in \cite{DKT_proc},
\begin{equation}\label{E2(n)}
 |E_2|\leq S_5 + S_6,   
\end{equation}
where 
\begin{align*}
S_5 := & \gamma e^{2\sqrt{A_d n}}\left(\exp(A_d^{\frac{1}{2}+\frac{3\varepsilon_2}{2}})-1\right)\sqrt{\pi}A_d^{\frac{1}{4}}n^{-\frac{3}{4}}, \\
S_6 := & \gamma\exp\left(2\sqrt{A_dn}-\frac{A_d^{\varepsilon_2/2}n^{1-\varepsilon_2/2}}{1+A_d^{\varepsilon}n^{-\varepsilon}}\right)A_d^{\frac{3}{2}}n^{-\frac{3}{2}}(1+A_d^{\varepsilon}n^{-\varepsilon})\\
&+\gamma A_d^{\frac{1}{2}}n^{-\frac{3}{2}}\exp\left(2\sqrt{A_dn}-A_d^{\varepsilon_2/2}n^{1-\varepsilon_2/2}\right).
\end{align*}
    To obtain $S_5\leq K_5 m_q(d)$ we find $N_5$ to ensure that for $n\geq N_5$,
    \begin{equation}
        \gamma e^{2\sqrt{A_d n}}\left(\exp(A_d^{\frac{1}{2}+\frac{3\varepsilon_2}{2}})-1\right)\sqrt{\pi}A_d^{\frac{1}{4}}n^{-\frac{3}{4}}\leq K_5\frac{A_d^{1/4}}{2\sqrt{\pi\alpha^{d-3}(d\alpha^{d-1}+1)}}n^{-\frac{3}{4}}\exp\left(2\sqrt{A_dn}\right).
    \end{equation}
    Equivalently, \begin{equation*}
        \exp\left(A^{\frac{1+3\varepsilon_2}{2}}_d n^{\frac{1-3\varepsilon_2}{2}}\right)-1\leq K_5,
    \end{equation*}
    or more directly
    \begin{equation*}
        A_d^{\frac{1+3\varepsilon_2}{2}}n^{\frac{1-3\varepsilon_2}{2}}\leq \log(K_5+1).
    \end{equation*}
    Since $\varepsilon_2>\frac{1}{3}$ it suffices to define
    \begin{equation}\label{N5}
        N_5:=\bigg\lceil \left(\frac{A_d^{\frac{1+3\varepsilon_2}{2}}}{\log(K_5+1)}\right)^{\frac{2}{3\varepsilon_2-1}}\bigg\rceil.
    \end{equation}

We next determine $N_6$ such that $S_6\leq K_6m_d(n)$ for all $n\geq N_6$ via the following inequality

   \begin{multline*}
       \gamma\exp\left(2\sqrt{A_dn}-\frac{A_d^{\varepsilon_2/2}n^{1-\varepsilon_2/2}}{1+A_d^{\varepsilon}n^{-\varepsilon}}\right)A_d^{\frac{3}{2}}n^{-\frac{3}{2}}(1+A_d^{\varepsilon}n^{-\varepsilon})+\gamma A_d^{\frac{1}{2}}n^{-\frac{3}{2}}\exp\left(2\sqrt{A_dn}-A_d^{\varepsilon_2/2}n^{1-\varepsilon_2/2}\right)\\ \leq K_6 \frac{A_d^{1/4}}{2\sqrt{\pi\alpha^{d-3}(d\alpha^{d-1}+1)}}n^{-\frac{3}{4}}\exp\left(2\sqrt{A_dn}\right),
   \end{multline*} 
   which is equivalent to 

   \begin{equation*}
       n^{-\frac{3}{4}}\left(\frac{A_d^{5/4}}{\sqrt{\pi}}\exp\left(-\frac{A_d^{\varepsilon_2/2}n^{1-\varepsilon_2/2}}{1+A_d^{\varepsilon}n^{-\varepsilon}}\right)(1+A_d^{\varepsilon}n^{-\varepsilon})+\frac{A_d^{\frac{1}{4}}}{\sqrt{\pi}}\exp(-A_d^{\frac{\varepsilon_2}{2}}n^{1-\frac{\varepsilon_2}{2}})\right)\leq K_6.
   \end{equation*}
   Using the fact that $n\geq 1$ we bound the exponential terms above by 1 and define
   \begin{equation}\label{N6}
       N_6:=\bigg\lceil\left(\frac{A_d^{5/4}(1+A_d^{\varepsilon})+A_d^{\frac{1}{4}}}{\sqrt{\pi}K_6}\right)^{\frac{4}{3}}\bigg\rceil.
   \end{equation}

As in \cite{DKT_proc}, 
\begin{equation}\label{E3(n)}
    |E_3|\leq S_7:=\gamma e^{2\sqrt{A_dn}}|f_{err}^{max}|(\pi A^{3/2}n^{-3/2}(1+A_d^{\varepsilon}n^{-\varepsilon}))^{\frac{1}{2}},
\end{equation}
where $|f_{err}^{max}|$ is a computable constant defined as the maximum value of \cite[(26)]{DKT_proc}.  To obtain $S_7\leq K_7 m_q(d)$ we find $N_7$ to ensure that for $n\geq N_7$,

\begin{equation*}
    \gamma e^{2\sqrt{A_dn}}|f_{err}^{max}|(\pi A^{3/2}n^{-3/2}(1+A_d^{\varepsilon}n^{-\varepsilon}))^{\frac{1}{2}}\leq K_7\frac{A_d^{1/4}}{2\sqrt{\pi\alpha^{d-3}(d\alpha^{d-1}+1)}}n^{-\frac{3}{4}}\exp\left(2\sqrt{A_dn}\right).
\end{equation*}
Simplifying the inequality, we have

\begin{equation*}
    |f_{err}^{\max}|A_d^{\frac{1}{2}}(1+A_d^{\varepsilon}n^{-\varepsilon})^{\frac{1}{2}}\leq K_7.
\end{equation*}
Thus, we can define 
\begin{equation}\label{N7}
    N_7:=\bigg\lceil\left(\frac{A_d^{1+\frac{1}{\varepsilon}}|f_{err}^{\max}|^{\frac{2}{\varepsilon}}}{(K_7^2-|f_{err}^{\max}|^2A_d)^{\frac{1}{\varepsilon}}}\right)\bigg\rceil.
\end{equation}

Set $\beta$ as in Lemma \ref{q2preliminaries}, and define $\eta$ as in \cite{DKT_proc}\footnote{The definition of $\eta$ here corrects some small typos in \cite{DKT_proc}.} by  
\[
\eta := e^{-\beta}\beta^{1-2\varepsilon}e^{-2\beta}\left(\frac{1}{1-e^{\beta}}-\frac{1}{\sqrt{1-2e^{-\beta}\cos (\beta^{1+\varepsilon})+e^{-2\beta}}}\right).
\]
Then as in \cite{DKT_proc},
\begin{multline}\label{I_2}
|I_2|\leq S_8:= \\
\sqrt{\frac{2\pi}{dA_d^{1/2}}}n^{\frac{1}{4}}e^{\eta\rho A_d^{\varepsilon-\frac{1}{2}}n^{\frac{1}{2}-\varepsilon}}(1+f_2(\rho,A_d^{\frac{1}{2}}n^{-\frac{1}{2}}))\exp\left(2\sqrt{A_dn}+\left(\frac{3-d}{2}\right)\log(\alpha)+f_1(\rho,A_d^{\frac{1}{2}}n^{-\frac{1}{2}})\right).
\end{multline}
  Thus, bounding $f_2(\rho, A_d^{\frac{1}{2}}n^{-\frac{1}{2}})\leq F_2$ and $f_1(\rho, A_d^{\frac{1}{2}}n^{-\frac{1}{2}})\leq F_1$ using Lemma \ref{q2preliminaries}, we have  

\begin{equation*}
   S_8\leq \sqrt{\frac{2\pi}{dA_d^{1/2}}}n^{\frac{1}{4}}e^{\eta\rho A_d^{\varepsilon-\frac{1}{2}}n^{\frac{1}{2}-\varepsilon}}(1+F_2)\exp\left(2\sqrt{A_dn}+\left(\frac{3-d}{2}\right)\log(\alpha)+F_1\right).
\end{equation*}
To obtain $S_8\leq K_8 m_q(d)$ we find $N_8$ to ensure that for $n\geq N_8$,
\begin{multline*}
    \sqrt{\frac{2\pi}{dA_d^{1/2}}}n^{\frac{1}{4}}e^{\eta\rho A_d^{\varepsilon-\frac{1}{2}}n^{\frac{1}{2}-\varepsilon}}(1+F_2)\exp\left(2\sqrt{A_dn}+\left(\frac{3-d}{2}\right)\log(\alpha)+F_1\right)\\ \leq K_8\frac{A_d^{1/4}}{2\sqrt{\pi\alpha^{d-3}(d\alpha^{d-1}+1)}}n^{-\frac{3}{4}}\exp\left(2\sqrt{A_dn}\right),
\end{multline*}
or equivalently,
\begin{equation}\label{N8}
    \frac{\sqrt{2}}{K_8\gamma A_d^{\frac{1}{4}}\sqrt{d}}(1+F_2)\exp\left(\frac{3-d}{2}\log\alpha +F_1\right)n\leq \exp\left(\eta\rho A_d^{\varepsilon-\frac{1}{2}}n^{\frac{1}{2}-
    \varepsilon}\right).
\end{equation}
The equation
\begin{equation*}
     \frac{\sqrt{2}}{K_8\gamma A_d^{\frac{1}{4}}\sqrt{d}}(1+F_2)\exp\left(\frac{3-d}{2}\log\alpha +F_1\right)n= \exp\left(\eta\rho A_d^{\varepsilon-\frac{1}{2}}n^{\frac{1}{2}-
    \varepsilon}\right)
\end{equation*}
has two positive solutions $\sigma_0 < \sigma_1$ for $n$ and \eqref{N8} is satisfied for all $n\geq \sigma_1$. We thus define $N_8 := \lceil \sigma_1 \rceil$, which we calculate using a root finding program in Sagemath for each $4\leq d \leq 61$.  Thus we have $S_8 \leq K_8m_d(n)$ for all $n\geq N_8$.
 
Then setting
\begin{equation}\label{N_qdef}N_q=\max \{N_4,N_5,N_6,N_7,N_8\}\end{equation} 
gives the desired result.
\end{proof}

\section{Discussion of computations}\label{computations}

In this section we present the computation of the positive integers $N(d)$ for which $n\geq N(d)$ guarantees $q_d^{(2)}(n)\geq Q_d^{(2,-)}(n)$, the recursive algorithms used to compute exact values of $Q_d^{(2)}(n)$ and $q_d^{(2)}(n)$ for $1\leq n\leq N(d)$, and how we compute the difference $q_d^{(2)}(n)-Q_d^{(2,-)}(n)$ for necessary values of $n$ to justify that $Q_d^{(2,-)}(n)\leq q_d^{(2)}(n)$ for every positive integer $n$.  The code we use is a modified version of the C++ code used by Alfes et al. \cite{AJLO} to compute values of $Q_d^{(1)}(n)$ and $q_d^{(1)}(n)$. 

\subsection{Determining bounds $N(d)$}

We first discuss our computation of $N(d)$ when $4\leq d \leq 61$ for which $n\geq N(d)$ guarantees that $q_d^{(2)}(n)\geq Q_d^{(2,-)}(n)$.  This requires a choice of values for several parameters subject to certain conditions, as well as a choice of values for the weights $K_i$, as described in Section \ref{bounds}.

The choices which have conditions that do not depend on the parity of $d$ are $0<\varepsilon<\frac{1}{2}$, $\varepsilon_2>\frac{1}{3}$, and $\varepsilon_2>\varepsilon$ which arise in the proof of Lemma \ref{qdbounds}, and also the computation of $|f_{err}^{max}|$ which depends on a choice of $\frac{3}{8}<c<\frac{1}{2}$ (see \cite{DKT_proc}).

For all $4\leq d\leq 61$ we choose the values given in Table \ref{tab:vals} based on experimentation in Sagemath, and in the case of $\xi$ redefine it (it is previously defined in \eqref{xi}), since the choice below is simpler and overestimates the error term $R_d(n)$.

\begin{table}[ht]
\[
\begin{tabular}{|c|c|c|c|}
\hline
$c$ & $\varepsilon$ & $\varepsilon_2$ & $\xi$ \\  
\hline
0.37501 & 0.11 & 1 & 0.224 \\
\hline
\end{tabular}
\]
\caption{Values of $c$, $\varepsilon$, $\varepsilon_2$, and $\xi$ for all $4\leq d\leq 61$.}
\label{tab:vals}
\end{table}

From the proof of Lemma \ref{Qdbounds} we require $0<\delta<\frac{1}{2}$.  Our choice of $\delta$ depends on the parity of $d$ and is given in Table \ref{tab:delta}.  

\begin{table}[ht]
\[
\begin{tabular}{|c|c|}
\hline
$d$ & $\delta$ \\
\hline \hline
$d$ even  & $1/3$ \\  
\hline
$d$ odd  & $1/80$ \\  
\hline
\end{tabular}
\]
\caption{Values of $\delta$ for $4\leq d\leq 61$ based on parity of $d$.}
\label{tab:delta}
\end{table}

For even $4\leq d\leq 60$, based on Sagemath experimentation and the relative sizes of $N_i$ for $1\leq i\leq 8 $ when all weights $K_i$ set to $1$, we choose values for $K_i$ based on the parity of $d$ as given in Table \ref{tab:K}. 

\begin{table}[ht]
\[
\begin{tabular}{|c|c|c|c|c|c|c|c|c|}
\hline
$d$ & $K_1$ & $K_2$ & $K_3$ & $K_4$ & $K_5$ & $K_6$ & $K_7$ & $K_8$ \\
\hline \hline
$d$ even  & $1/800$ & $1/800$ & $1/2$ & $1/800$ & $1/800$ & $1/800$ & $1/800$ & $394/800 $ \\ 
\hline
$d$ odd  & $1/800$ & $1/8$ & $1/8$ & $1/800$ & $1/800$ & $1/800$ & $1/800$ & $595/800$ \\ 
\hline
\end{tabular}
\]
\caption{Values of weights $K_i$ for $4\leq d\leq 61$ based on parity of $d$.}
\label{tab:K}
\end{table}

Using the values given in Tables \ref{tab:vals}, \ref{tab:delta}, and \ref{tab:K}, we compute $N(d)$ for each $4\leq d\leq 61$ as given in Table \ref{tab:N(d)}.  Notably, for even $6\leq d \leq 60$, and odd $9\leq d\leq 61$, we have
\[
N(d)<10^7,
\]
and the only larger values are

\begin{align*}
N(4) & < 3.9\times 10^7, \\
N(5) & < 1.5 \times 10^8, \\
N(7) & < 1.7\times 10^7. 
\end{align*}

\begin{table}[ht]
\[    
    \begin{tabular}{|c|c||c|c||c|c|}
    \hline
       $d$  & $N(d)$  & $d$ & $N(d)$ & $d$ & $N(d)$ \\
       \hline \hline
        $4$ & $38133800$ & $24$ & $1168195$ & $44$ & $3300632$ \\
         \hline
         5 &142685922    &  25&1174519 & 45&3257697 \\
         \hline
          6& 2270342 &  26& 1331711& 46&3576985\\
         \hline
          7& 16962519&27  & 1334627   & 47&  3527299  \\
         \hline
          8& 577857 &  28& 1505944 & 48&3865326\\
         \hline
          9&  4661719   &  29&   1505109  &49& 3808560  \\
         \hline
          10& 314268 & 30& 1691018&50&5165784\\
         \hline
          11&  1886829   & 31& 1686090   &51&  4101610  \\
         \hline
          12&405797  & 32& 1887055 &52&4478487\\
         \hline
          13&  949272   & 33&  1877697   &53& 4406575  \\
         \hline
          14& 507346 & 34& 2094182 &54&4803561\\
         \hline
          15&  547612   &  35&  2080058   &55& 4723585  \\
         \hline
          16& 618979  & 36& 2312526 &56&5141132\\
         \hline
          17&  635395   & 37&  2293302  &57& 5052765  \\
         \hline
          18&740779  & 38& 2542214 &58&5491330\\
         \hline
         19 &  755215   & 39&  2517558   &59&  5394245  \\
         \hline
          20&  872843& 40& 2783376&60&5854276\\
          \hline
           21&  884932   &  41&  2752957  &61 &  5748150  \\
         \hline
          22& 1015278 &  42&3036139 & &\\
          \hline
          23&  1024661    &  43&  2999626   & &\\
         \hline
    \end{tabular}
 \]   
    \caption{Values of $N(d)$ for each $4\leq d\leq 61$.}
    \label{tab:N(d)}
\end{table}

\subsection{Algorithms to compute $Q_d^{(2)}(n)$, $Q_d^{(2,-)}(n)$, and $q_d^{(2)}(n)$}

The algorithm we use to compute values of $Q_d^{(2)}(n)$ for large $n$ is recursive.  The recursive step to generate $Q_d^{(2)}(n)$ relies on generating each of the allowable parts up to the maximum allowable sized part. We will refer to the allowable parts as a sequence, $(a_k)_{k=0}^{\infty}$ and order them from smallest to largest. So, $a_0=2$, $a_1=d+1$, $a_2=d+5,$ $a_3=2d+4$, and so on. 

Denote by $Q_k(n)$ the number of partitions of $n$  with parts from $(a_k)_{k=0}^{\infty}$  having largest part $a_k$. Then, split these into two sets based on whether $a_k$ appears exactly once in the partition or more than once. Let $A$ be the set of these partitions for which there is exactly one occurrence of $a_k$, and $B$ those for which there are two or more occurrences of $a_k$. Consider a partition of $n$ in $A$; if we remove the part $a_k$, we obtain a partition of $n-a_k$ whose largest part is at most $a_{k-1}$. So, the number of partitions in $A$ is equal to $\sum_{i=0}^{k-1}Q_i(n-a_k)$.  Next, consider a partition of $n$ in $B$; if we remove the part $a_k$, then since there were at least two occurrences of $a_k$ in the partition, we obtain a partition of $n-a_k$ still with largest part $a_k$. Thus, the number of partitions in $B$ is equal to $Q_k(n-a_k)$. Since $A$ and $B$ are disjoint and their union is the set of all partitions of $n$ with largest part $a_k$, we have the recursion
\begin{equation}
    Q_k(n)=Q_k(n-a_k)+\sum_{i=0}^{k-1}Q_i(n-a_k).
\end{equation}
We continue this process, running through all allowable parts.  We can also generate $Q_d^{(2,-)}(n)$ in this way by skipping $a_1=d+1$ in the recursion.

The algorithm to generate $q_d^{(2)}(n)$ is also recursive. To find exact values for $q_d^{(2)}(n)$, we use the fact that there is a bijection between partitions of $n$ into $k$ parts and $d$-distinct partitions of $n+d\binom{k}{2}+2k$ into $k$ parts which are all greater than $1$. Leveraging this bijection, we instead compute the total number of partitions of $n$ into exactly $k$ parts, which we will denote by $p_k(n)$. By summing the values of $p_k(n)$ appropriately, we can find $q_d^{(2)}(n)$ for $1\leq n\leq N(d)$.  To compute $p_k(n)$ we use the recursion
\begin{equation*}
    p_k(n)=p_{k-1}(n-1)+p_k(n-k),
\end{equation*}
which is explained by splitting the partitions counted by $p_k(n)$ into those which have $1$ as a part, and those which don't.  Those which have $1$ as a part can be enumerated by $p_{k-1}(n-1)$ via the bijection of removing a part of size $1$ to obtain a partition of $n-1$ into $k-1$ parts.  Those which do not contain $1$ as a part are enumerated by $p_k(n-k)$ via the bijection of removing $1$ from each of the $k$ parts to obtain a partition of $n-k$ into $k$ parts.

\subsection{Computing}
Using the described recursive algorithms and the High Performance Computing Cluster (HPC) at Oregon State University we computed exact values of $\Delta_d^{(2)}(n)$ and $\Delta_d^{(2,-)}(n)$ for $1\leq n\leq N(d)$ and $d$ as in Theorem \ref{fullKP}. For all $4\leq d \leq 61$ we had success computing $\Delta_d^{(2)}(n)$ and $\Delta_d^{(2,-)}(n)$ up to $n=1.9 \times 10^7$. For $N(d)\leq 10^7$, it takes a few hours to compute $\Delta_d^{(2)}(n)$ for all $n\leq N(d).$ The time increases dramatically as $N(d)$ increases. However, it still takes less than two weeks to compute $\Delta_d^{(2)}(n)$ up to $n=1.9 \times 10^7$ on the HPC.

Unfortunately, it takes too long to compute $\Delta_4^{(2)}(n)$ and $\Delta_5^{(2)}(n)$ up to our largest bounds $N(4)$ and $N(5)$ which is why these cases are excluded from Theorem \ref{fullKP}.

Upon computing $q_d^{(2)}(n)-Q_d^{(2)}(n)$ for all even $6\leq d\leq 60$ and $1\leq n\leq N(d),$ we have shown that $\Delta_d^{(2)}(n)\geq 0$.  When $d$ is odd however, there are values of $n$ for which $q_d^{(2)}(n)-Q_d^{(2)}(n)$ is negative. However these values occur precisely when $n=d+1$, $d+3$, and $d+5$, which proves Theorem \ref{CKK_extension}. 

Our computations confirm that $\Delta_d^{(2,-)}(n)\geq 0$ for all $n\geq 1$ when  $6\leq d\leq 61$.  Thus, we have proven Theorem \ref{fullKP}. Additionally, we have computed that $\Delta_d^{(2,-)}(n)\geq 0$ when $d\in\{3,4,5\}$ for $1\leq n\leq 10^7$.

\section{Concluding Remarks}\label{conclusion}

This paper, along with the work of Duncan et al. \cite{DKST} settles Kang and Park's conjecture for all values of $d$ except $3$, $4$, and $5$. We note that for $d=4$ it remains only to show that $\Delta_d^{(2)}(n)\geq 0$ for $10^7 < n < 3.8 \times 10^7$, and for $d=5$ it remains to show that $\Delta_d^{(2,-)}(n)\geq 0$ for $10^7 < n < 1.5 \times 10^8$.  However for $d=3$ asymptotic bounds as in Alfes et al.\cite{AJLO} have not yet been worked out and we suspect that an extension of their results for $Q_3^{(1)}(n)$ to overestimate $Q_3^{(2,-)}(n)$ may produce a $N(3)$ that is too large to compute $\Delta _3^{(2)}(n)$ for all $1\leq n\leq N(3)$. It would be interesting to see whether a combinatorial approach could prove the $d=3 $case as thus far that approach has not yet succeeded. 

Computational constraints also arise when we attempt to further extend Cho, Kang, and Kim's result \cite[Theorem 1.1]{CKK} to $62\leq d\leq 252$, $d\neq 126$. In general the estimations for the constants involved in the error terms of the asymptotics for $Q_d^{(2)}(n)$ and $q_d^{(2)}(n)$ leave room for improvement, and doing so would allow for less computational constraint and align more closely to what we observe in computations.  Of course, other methods may prove more fruitful.

We further note that Duncan et al. \cite{DKST} also investigated generalizing Kang and Park's conjecture \eqref{KP_conj} to general $a$.  Recent progress on these generalizations has been done by Inagaki and Tamura \cite{IT} as well as Armstrong, Ducasse, Meyer, and the second author \cite{ADMS}.  In particular, it is now known that $\Delta_d^{(3,-)}(n)$ holds for all but finitely many cases.  The methods described in this paper could perhaps be generalized to prove these as well.

\end{document}